\newtheorem{proposition}{Proposition}[section]
\newtheorem{lemma}[proposition]{Lemma}
\newtheorem{theorem}[proposition]{Theorem}
\theoremstyle{definition}
\newtheorem{definition}[proposition]{Definition}
\newtheorem{example}[proposition]{Example}
\theoremstyle{remark}
\newtheorem{remark}[proposition]{Remark}
\newcommand{\lelabel}[1]{\label{le:#1}}
\newcommand{\leref}[1]{Lemma~\ref{le:#1}}
\newcommand{\prlabel}[1]{\label{pr:#1}}
\newcommand{\prref}[1]{Proposition~\ref{pr:#1}}
\newcommand{\relabel}[1]{\label{re:#1}}
\newcommand{\reref}[1]{Remark~\ref{re:#1}}
\newcommand{\exlabel}[1]{\label{ex:#1}}
\newcommand{\eqlabel}[1]{\label{eq:#1}}
\newcommand{\equref}[1]{(\ref{eq:#1})}
\def\a{\alpha}
\def\d{\delta}
\def\D{\Delta}
\def\ep{\varepsilon}
\def\End{\mathrm{End}}
\def\Hom{\mathrm{Hom}}
\def\Ker{\mathrm{Ker}}
\def\lhu{\leftharpoonup}
\def\rhu{\rightharpoonup}
\def\op{\oplus}
\def\ot{\otimes}
\def\ra{\rightarrow}
\def\ss{\subseteq}
\def\ti{\times}
\def\vf{\varphi}
\def\Z{\mathbb{Z}}
\def\<{\leq}
\def\>{\geq}
\date{}
\begin{document}
\title{Dorroh extensions of algebras and coalgebras, I}
\thanks{}
\author{Lan You}
\address{School of Mathematical Science, Yangzhou University, Yangzhou 225002, China;
School of Mathematics and Physics, Yancheng Institute
of Technology, Yancheng 224051, China}
\email{youl@ycit.cn}
\author{Hui-Xiang Chen}
\address{School of Mathematical Science, Yangzhou University, Yangzhou 225002, China}
\email{hxchen@yzu.edu.cn}
\subjclass[2010]{16S70,16T15,16D25}
\keywords{Dorroh extension of algebra, Dorroh extension of coalgebra, Dorroh pair of coalgebras, finite dual}

\begin{abstract}
In this article, we study Dorroh extensions of algebras and Dorroh extensions of coalgebras. 
Their structures are described. Some properties  of these extensions are presented.  
We also introduce the finite duals of algebras and modules which are not unital. 
Using these finite duals, we determine the dual relations between the two kinds of extensions.
\end{abstract}
\maketitle

\section*{Introduction}

Let $R$ and $S$ be two associative rings not necessarily containing identities. $S$ is said to be a
{\it Dorroh extension} of $R$ if $R$ is a subring of $S$ and there is an ideal $I$ of $S$
such that $S=R\op I$. Clearly, this is equivalent to that there is a projection from
$S$ onto $R$ as rings.

There is a general method to construct Dorroh extensions of rings as follows. Let $R$ and $I$ be
two associative rings not necessarily containing identities. Suppose that $I$ is an $R$-bimodule
such that the actions of $R$ on $I$ are compatible with the multiplication of $I$, that is,
$r(xy)=(rx)y$, $(xr)y=x(ry)$ and $(xy)r=x(yr)$ for all $r\in R$ and $x, y\in I$. Then $R\op I$ is an associative
ring with the multiplication given by $(r,x)(p,y)=(rp,xp+ry+xy)$, $r, p\in R$, $x, y\in I$.
Moreover, $R$ and $I$ cab be canonically regarded as additive subgroups of $R\op I$
via the identifying $r=(r,0)$ and $x=(0,x)$, respectively, where $r\in R$ and $x\in I$.
In this case, $R$ is a subring of $R\op I$ and $I$ is an ideal of $R\op I$.
Thus, $R\op I$ is a Dorroh extension of $R$, called a {\it Dorroh extension of $R$ by $I$}.
$(R, I)$ is called a {\it Dorroh pair of rings}. If $R$ has an identity $1_R$ and $RI=I=IR$,
then $(1_R, 0)$ is an identity of $R\op I$. When $R=\Z$, the ring of integers, this procedure is exactly
the method to embed  a ring (without identity) into a ring with identity given by Dorroh in \cite{D}.

In ring theory, Dorroh extension has become an important method for constructing new rings and investing properties of rings.
Many ring constructions  can be regarded as Dorroh extensions of rings,
for instance, the trivial extension of a ring with a bimodule, triangular matrix rings, $\mathbb{N}$-graded rings and so on. The
Dorroh extension $\Z \op I$ is used to describe the relative $K_0$-group
of a ring and an ideal in \cite[Section 1.5]{Ro}. Cibils etc. computed the Hochschild cohomology of Dorroh extension
(which is called {\it split algebra}) in \cite{CMRS}.
Furthermore, properties of Dorroh extensions of rings are referred to \cite{AJM, CS, DF, F, M}.

In this paper, we investigate Dorroh extensions of algebras, Dorroh extensions of coalgebras and the finite duals of algebras and their modules. This paper is organized as follows. In Section 1, we investigate Dorroh extensions of algebras and Dorroh pair of algebras. For any Dorroh pair $(A, I)$ of algebras, one can form an algebra $A\ltimes_d I$. $A\ltimes_d I$ is an algebra Dorroh extension of $A$ and  any algebra Dorroh extension of $A$ has such a form. It is shown that $A\ltimes_d I$ is isomorphic to the direct product algebra
$A\times I$ if $I$ is unital. A universal property for $A\ltimes_d I$ is given. 
Then some examples of Dorroh extension of algebra are given and the modules over $A\ltimes_d I$ are described.
 In Section 2, we introduce Dorroh extension of coalgebras and Dorroh pair of coalgebras. Some examples and properties are given. For any Dorroh pair $(C, P)$ of coalgebras, one can form a coalgebra $C\ltimes_d P$. It is shown that $C\ltimes_d P$ is a coalgebra Dorroh extension of $C$ and that any coalgebra Dorroh extension of $C$ is isomorphic to some $C\ltimes_d P$. 
It is shown that  $C\ltimes_d P$ is isomorphic to the direct product coalgebra $C\times P$
if $P$ is counital. A universal property for $C\ltimes_d P$ is given.
The comodules over $C\ltimes_d P$ are also described.
In Section 3, we first introduce the finite dual $A^{\circ}$ of an algebra $A$,
which is different from the usual definition (\cite[Lemma 9.1.1]{Mo}.
It is shown that the finite dual of an algebra is a coalgebra. Then we introduce the finite dual $M_r^{\circ}$
(resp., $M_l^{\circ}$) of a right (resp., left) module $M$ over an algebra $A$.
It is shown that $M_r^{\circ}$ (resp., $M_l^{\circ}$) is a right (resp., left) $A^{\circ}$-comudule,
and that if $M$ is an $A$-bimodule then $M^{\circ}_{(A)}=M_r^{\circ}\cap M_l^{\circ}$ is
an $A^{\circ}$-bicomudule. Then we describe the  the dual relations between the Dorroh extensions of algebras and the Dorroh extensions of coalgebras.
It is shown that if $(C, P)$ is a  Dorroh pair of coalgebras then $(C^*, P^*)$ is a Dorroh pair of algebras
and $(C\ltimes_d P)^*\cong C^*\ltimes_d P^*$ as algebras,
and that if $(A, I)$ is a  Dorroh pair of algebras then $(A^{\circ}, I^d)$ is a Dorroh pair of coalgebras
and $(A\ltimes_d I)^{\circ}\cong A^{\circ}\ltimes_d I^d$ as coalgebras, where $I^d=I^{\circ}\cap I^{\circ}_{(A)}$.
The finite dual of a module over a unital algebra is also described.

In the subsequent paper \cite{YC2}, we will discuss Dorroh extensions of bialgebra and Hopf algebras, 
the ideals of Doroh extensions of algebras and the subcoalgebras of Dorroh extensions of coalgebras.

Throughout this paper, we work over a field $k$. All algebras are associative $k$-algebras not necessarily containing identities. So each ideal of an algebra is also a subalgebra. An algebra is {\it unital} if it has an identity. All modules, morphisms (of algebras and of modules) may not preserve identities unless otherwise mentioned. Similarly, all coalgebras are coassociative $k$-coalgebras not necessarily having counits. So each subcoalgebra is also a coideal. A coalgebra is called {\it counital} if it has a counit. All comodules, morphisms (of coalgebras and of comodules) are not required to preserve counits unless otherwise stated.

For basic facts about coalgebras and Hopf algebras, the readers can refer to the books \cite{Abe, Mo, Sw}.

\section{Dorroh extensions of algebras}
In this section, we shall discuss some properties of Dorroh extensions of algebras. 
All the material presented here can be extended to the case of rings. In fact,
some results are known for the case of rings. 

Let $A$ and $B$ be two algebras. $B$ is called {\it an algebra Dorroh extension of $A$} if $A$ is a subalgebra of $B$ and there is an ideal $I$ of $B$ such that $B=A\oplus I$ as vestor spaces. In this case, $B$ is also called {\it an algebra Dorroh extension of $A$ by $I$}.
 
Let $A$ and $I$ be  two algebras. $(A, I)$ is called {\it a Dorroh pair of algebras} if $I$ is an $A$-bimodule such that the module actions
are compatible with the multiplication of $I$, that is,
\begin{equation}\eqlabel{d1}
   a(xy)=(ax)y, \quad  (xa)y=x(ay), \quad   (xy)a=x(ya),
\end{equation}
for all $a\in A$ and $x,y\in I$.
In this case, $I$ is called an {\it $A$-Algebra} \cite{CS}.
$I'$ is called  {\it an $A$-subalgebra} of $I$ if $I'$ is a subalgebra and a subbimodule of $I$. $J$ is called {\it an $A$-Ideal} of $I$ if $J$ is an ideal and a subbimodule of $I$.

Given a Dorroh pair $(A, I)$ of algebras, one can construct an algebra $A\ltimes_d I$ as follows: $A\ltimes_d I=A\op I$ as a vector space, the multiplication is defined by
\begin{equation}\eqlabel{d2}
  (a,x)(b,y)=(ab, ay+xb+xy),\ (a,x), (b, y)\in A\ltimes_dI.
\end{equation}
$A$ and $I$ can be canonically embedded into  $A\ltimes_d I$ via $A\hookrightarrow A\ltimes_d I$, $a\mapsto(a,0)$
and  $I\hookrightarrow A\ltimes_d I$, $i\mapsto(0, i)$, respectively. In this case, $A$ is a subalgebra of $A\ltimes_d I$
and $I$ is an ideal of $A\ltimes_d I$. Hence $A\ltimes_d I$ is an algebra Dorroh extension of $A$ by $I$. Conversely, if $B$ is an algebra Dorroh extension of $A$ by $I$, then $(A,I)$ is a Dorroh pair of algebras and $B\cong A\ltimes_dI$ as algebras, where the module actions of $A$ on $I$ are given by the multiplication.

Let $(A, I)$ be a Dorroh pair of algebras. If $A$ is unital and the actions of $A$ on $I$ are unital, i.e., $AI=I=IA$, then $(A, I)$ is called {\it unital}. In this case, $A\ltimes_d I$ is a unital algebra with the identity $(1_A,0)$. When $I$ is unital, the structure of $A\ltimes_d I$ is much simpler.

\begin{proposition}\prlabel{prop1.1}
Let $(A,I)$ be a Dorroh pair of algebras. If $I$ is a unital algebra with identity $1_I$, then $a1_I=1_Ia$ for any $a\in A$, and $A\ltimes_d I\cong A\times I$ as algebras.
\end{proposition}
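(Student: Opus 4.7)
The plan is to first verify the commutativity $a 1_I = 1_I a$ directly from the axioms \equref{d1}, and then produce an explicit linear isomorphism $\phi\colon A\ltimes_d I \to A\times I$ whose multiplicativity is forced by the three compatibility relations together with the identity property of $1_I$.

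For the first assertion, I would apply the middle compatibility $(xa)y = x(ay)$ with $x = y = 1_I$. Since the left $A$-action sends $A\otimes I$ into $I$, both $1_I a$ and $a 1_I$ lie in $I$; using that $1_I$ is a two-sided identity of $I$, the left side collapses as $(1_I a)1_I = 1_I a$ while the right side collapses as $1_I(a 1_I) = a 1_I$. This yields $1_I a = a 1_I$ for every $a \in A$, and in particular shows that the embedded copies of $A$ and $\{0\}\times I$ intertwine correctly through $1_I$.

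For the isomorphism I would set
\[
\phi\colon A\ltimes_d I \longrightarrow A\times I, \qquad \phi(a,x) \;=\; (a,\, a 1_I + x),
\]
with inverse $\phi^{-1}(a,y) = (a,\, y - a 1_I)$; bijectivity and $k$-linearity are immediate. The core calculation is that $\phi$ converts the twisted product \equref{d2} into the componentwise product of $A\times I$. Expanding $(a 1_I + x)(b 1_I + y)$ in $I$ yields four terms, each of which simplifies by exactly one ingredient: $(a 1_I)(b 1_I) = (ab)1_I$ follows from $a(xy)=(ax)y$ with $x=1_I,\, y = b 1_I$ and the $A$-bimodule axiom; $(a 1_I)y = ay$ follows from $a(xy)=(ax)y$ with $x=1_I$; and $x(b 1_I) = xb$ follows from $(xa)y=x(ay)$ with $y=1_I$. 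Summing and comparing with $\phi\bigl((a,x)(b,y)\bigr) = (ab,\, (ab)1_I + ay + xb + xy)$ finishes the verification.

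The only real obstacle is choosing the right ansatz for $\phi$: the naive identity map $A\ltimes_d I \to A\times I$ is not multiplicative because of the cross-terms $ay + xb$ in \equref{d2}, so one needs the correction $x \mapsto x + a 1_I$ that absorbs these cross-terms into the product $(a 1_I + x)(b 1_I + y)$ inside $I$. Once this map is written down, the verification is a routine bookkeeping exercise in which each of the three Dorroh compatibilities is used exactly once, together with the identity property of $1_I$.
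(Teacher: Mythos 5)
Your proof is correct and follows essentially the same route as the paper: the commutativity $a1_I=1_Ia$ is obtained from $(xa)y=x(ay)$ with $x=y=1_I$, and the isomorphism is the same map $(a,x)\mapsto(a,x+a1_I)$ verified by the same four-term expansion (the paper routes $x(b1_I)$ through $(x1_I)b$ using the already-proved commutativity, while you use $(xb)1_I=x(b1_I)$ directly, a negligible difference). The only quibble is your closing remark that each compatibility is used ``exactly once''; in fact $a(xy)=(ax)y$ is invoked twice and $(xy)a=x(ya)$ need not be used at all, but this does not affect the argument.
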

\begin{proof}
Suppose that $(A,I)$ is a Dorroh pair of algebras. By Eqs.\equref{d1}, $a1_I=1_I(a1_I)=(1_Ia)1_I=1_Ia$ for all $a\in A$.
Define a map $\eta: A\ltimes_d I\ra A\ti I$ by $\eta(a,x)=(a, x+a1_I)$. Clearly, $\eta$ is a linear isomorphism.
For any $(a,x), (b,y)\in A\ltimes_d I$,
we have $ \eta((a,x)(b,y))=\eta(ab,ay+xb+xy)=(ab,ay+xb+xy+ab1_I)$ and
\begin{equation*}
\begin{split}
    \eta(a,x)\eta(b,y)&=(a,x+a1_I)(b,y+b1_I)\\
    &=(ab,xy+(a1_I)y+x(b1_I)+(a1_I)(b1_I))\\
    &=(ab,xy+a(1_Iy)+(x1_I)b+ab1_I)\\
    &=(ab,xy+ay+xb+ab1_I).
\end{split}
\end{equation*}
Hence $\eta$ is an algebra isomorphism.
\end{proof}

For Dorroh pairs $(A,I)$ and $(A',I')$, a pair $(\phi, f)$ is called a {\it Dorroh homomorphism} from $(A,I)$ to $(A',I')$
if $\phi: A\ra A'$ and $f:I\ra I'$ are algebra homomorphisms satisfying $f(ax)=\phi(a)f(x)$ and $f(xa)=f(x)\phi(a)$
for all $a\in A$ and $x\in I$.
Let $\mathcal{DPA}$ be the category of all Dorroh pairs of algebras and Dorroh homomorphisms.

Let $\tau_A: A\ra A\ltimes_d I$ and $\tau_I: I\ra A\ltimes_d I$ be the canonical embedding maps. Then $\tau_I(ax)=\tau_A(a)\tau_I(x)$ and $\tau_I(xa)=\tau_I(x)\tau_A(a)$, that is, $(\tau_A, \tau_I)$ is
a Dorroh homomorphism from $(A,I)$ to $(A\ltimes_d I, A\ltimes_d I)$.

The following result describes that Dorroh extension $A\ltimes_d I$ and the embedding maps
$\tau_A$ and $\tau_I$ have a universal property in the category of all algebras. For the case of rings, one can refer to \cite[Theorem 7]{F}.
\begin{proposition}
Let $(A,I)$ be a Dorroh pair of algebras and $B$ an algebra. If $(\vf, f): (A, I)\ra(B, B)$ is a Dorroh homomorphism, then there is a unique algebra homomorphism $\eta: A\ltimes_d I\ra B$ such that $\eta\tau_A=\vf$ and $\eta\tau_I=f$.
\end{proposition}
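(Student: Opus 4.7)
The plan is to define $\eta: A\ltimes_d I \to B$ by the only formula compatible with the $k$-linearity of $\eta$ and the required compatibilities: namely
\[
\eta(a,x) \;=\; \vf(a) + f(x), \qquad (a,x)\in A\ltimes_d I.
\]
Since $A\ltimes_d I = A\op I$ as a vector space, and $\tau_A(a)=(a,0)$, $\tau_I(x)=(0,x)$, any $k$-linear map $\eta$ satisfying $\eta\tau_A=\vf$ and $\eta\tau_I=f$ is forced to equal $\vf(a)+f(x)$ on $(a,x)=(a,0)+(0,x)$. This immediately gives uniqueness and also shows $\eta$ is well defined and $k$-linear.

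Next I would verify that $\eta$ is multiplicative. Using the multiplication \equref{d2} in $A\ltimes_d I$ and the hypotheses on $(\vf,f)$ (that $\vf$ and $f$ are algebra homomorphisms and $f(ax)=\vf(a)f(x)$, $f(xa)=f(x)\vf(a)$), I would compute
\[
\eta((a,x)(b,y)) \;=\; \eta(ab,\,ay+xb+xy) \;=\; \vf(a)\vf(b) + \vf(a)f(y) + f(x)\vf(b) + f(x)f(y),
\]
and observe that the right-hand side factors as $(\vf(a)+f(x))(\vf(b)+f(y)) = \eta(a,x)\eta(b,y)$. This is a short direct check; no nontrivial manipulation is needed beyond matching the four cross-terms in the expansion.

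There is no serious obstacle here; the proposition is essentially a universal-property statement whose content is that the three compatibilities in the definition of a Dorroh homomorphism are exactly what is needed to make the linear map $\vf\oplus f$ into an algebra homomorphism out of $A\ltimes_d I$. The only point that deserves mention is that we are working without identities, so one does not need to worry about $\eta(1)=1$; this matches the convention announced in the introduction that morphisms of algebras are not required to preserve identities.
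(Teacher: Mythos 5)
Your proposal is correct and follows essentially the same route as the paper: define $\eta(a,x)=\vf(a)+f(x)$, verify multiplicativity by expanding $\eta(ab,\,ay+xb+xy)$ against the four cross-terms of $(\vf(a)+f(x))(\vf(b)+f(y))$ using the Dorroh homomorphism conditions, and obtain uniqueness from linearity since $(a,x)=\tau_A(a)+\tau_I(x)$. Nothing further is needed.
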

\begin{proof}
Define a map $\eta: A\ltimes_d I\ra B$ by $\eta(a,i)=\vf(a)+f(i)$, $(a,i)\in A\ltimes_d I$. Then $\eta$ is linear.
For any $(a,x),(b,y)\in A\ltimes_d I$, $\eta ((a,x)(b,y))=\eta(ab,ay+xb+xy)=\vf(ab)+f(ay+xb+xy)
=\vf(a)\vf(b)+\vf(a)f(y)+f(x)\vf(b)+f(x)f(y)=\eta(a,x)\eta(b,y)$.
Hence $\eta$ is an algebra homomorphism.
Clearly, $\vf=\eta\tau_A$ and $f=\eta\tau_I$. If $\eta': A\ltimes_d I\ra B$ is also an algebra homomorphism such that $\vf=\eta'\tau_A$ and $f=\eta'\tau_I$, $\eta'(a,x)=\eta'(\tau_A(a)+\tau_I(x))=\vf(a)+f(x)=\eta(a,x)$
for any $(a,x)\in A\ltimes_d I$. Therefore, $\eta$ is unique.
\end{proof}

Now we give some examples of Dorroh pairs and Dorroh extensions.

\begin{example}\exlabel{ex1.6}
(a) Let $I$ be an algebra without identity. Then $k\ltimes_d I$ is a unital algebra with the identity
$(1,0)$.

(b) Let $A$ and $B$ be two algebras. Regard $B$ as an $A$-bimodule with the trivial actions defined by $ab=ba=0$ for any $a\in A$ and
$b\in B$. Then $(A, B)$ is a Dorroh pair of algebra and $A\ltimes _d B$ is exactly the direct product algebra $A\times B$.

(c) Let $A$ be an algebra and $M$ an $A$-bimodule. Then one can form a trivial extension $A\ltimes M$ as follows: $A\ltimes M=A\op M$ as a vector space, and the multiplication is defined by $(a,m)(a',m')=(aa',am'+ma')$, $a,a'\in A$, $m,m'\in M$. If we set $mm'=0$ for any $m,m'\in M$,
then $(A,M)$ is a Dorroh pair and the Dorroh extension $A\ltimes_d M$ is exactly the trivial extension $A\ltimes M$.

(d) Let $A$ and $B$ be two algebras, $M$ an $A$-$B$-bimodule. Then one can construct the triangular
matrix algebra
$\left(\begin{array}{cc}
                     A & M \\
                     0 & B \\
                   \end{array}
\right)$.
Note that $M$ is also an $A\times B$-bimodule with the actions given by $(a, b)m:=am$ and $m(a, b):=mb$, $a\in A$, $b\in B$, $m\in M$.
In this case, the trivial extension $(A\times B)\ltimes M$ is isomorphic to the triangular matrix algebra
$\left(\begin{array}{cc}
                     A & M \\
                     0 & B \\
                   \end{array}
                 \right)$. In particular, one-point extension $\left(
                   \begin{array}{cc}
                     A & M \\
                     0 & k \\
                   \end{array}
                 \right)$ is a Dorroh extension.

 (e) Let $(A, I)$ be a Dorroh pair of algebras. Consider the multiplicative monoid $\mathbb{Z}_2=\{0,1\}$ with $0\cdot 0=0\cdot 1=1\cdot 0=0$ and $1\cdot 1=1$.  Then $B=A\ltimes_d I$ is a $\mathbb{Z}_2$-graded algebra with  $B_1:=\tau_A(A)=A$ and $B_0:=\tau_I(I)=I$.
Conversely, any $\mathbb{Z}_2$-graded algebra $A=A_1\op A_0$ is a Dorroh extension of $A_1$ by $A_0$
since $A_1$ is a subalgebra of $A$ and $A_0$ is an ideal of $A$.

(f) Let $B=\oplus_{i=0}^{\infty}B_i$ be an $\mathbb{N}$-graded algebra.
Set $I:= \oplus_{i=1}^{\infty}B_i$. Then $B$ is a Dorroh extension of $B_0$ by $I$.
\end{example}

Now we describe the modules over $A\ltimes_dI$.

\begin{proposition}\prlabel{prop1.7}
Let $(A,I)$ be a Dorroh pair of algebras and $M$ a vector space. Then $M$ is a left (resp., right) $A\ltimes_d I$-module
if and only if $M$ is both a left (resp., right) $A$-module and a left (resp., right) $I$-module such that
$$a(xm)=(ax)m,\quad x(am)=(xa)m, \quad (\text{resp}., \, (mx)a=m(xa),\quad(ma)x=m(ax)),$$
where $a\in A$, $x\in I$ and $m\in M$.

If this is the case, then $M$ is an $A\ltimes_d I$-bimodule
if and only if $M$ is an $A$-bimodule, an $I$-bimodule, an $A$-$I$-bimodule and an $I$-$A$-bimodule.
\end{proposition}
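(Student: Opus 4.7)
The plan is to pull the $A\ltimes_d I$-module structure back and forth along the canonical algebra embeddings $\tau_A,\tau_I$, and to verify that the compatibility of the resulting $A$- and $I$-actions is controlled exactly by the listed identities. I work out the left-module case in detail; the right-module case is completely symmetric. For the direction $(\Rightarrow)$, if $M$ is a left $A\ltimes_d I$-module then restriction along $\tau_A$ and $\tau_I$ makes $M$ simultaneously a left $A$-module via $am:=(a,0)m$ and a left $I$-module via $xm:=(0,x)m$. Using \equref{d2} one checks $(a,0)(0,x)=(0,ax)$ and $(0,x)(a,0)=(0,xa)$ in $A\ltimes_d I$; applying these to $m$ yields $a(xm)=(ax)m$ and $x(am)=(xa)m$, the required compatibilities.

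For $(\Leftarrow)$, I define $(a,x)\cdot m:=am+xm$. Bilinearity over $k$ is immediate, so only associativity of the action needs checking. Expanding via \equref{d2},
\[
((a,x)(b,y))\cdot m=(ab)m+(ay)m+(xb)m+(xy)m,
\]
while
\[
(a,x)\cdot((b,y)\cdot m)=a(bm)+a(ym)+x(bm)+x(ym).
\]
The four matching pairs coincide: $(ab)m=a(bm)$ by the $A$-module axiom, $(xy)m=x(ym)$ by the $I$-module axiom, and $(ay)m=a(ym)$, $(xb)m=x(bm)$ are precisely the two compatibility hypotheses.

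For the bimodule statement, assume $M$ already carries left and right one-sided structures of the form just described. Then $M$ is an $A\ltimes_d I$-bimodule if and only if $((a,x)m)(b,y)=(a,x)(m(b,y))$ for all arguments. Expanding both sides using $(a,x)\cdot m=am+xm$ and its right-handed analog, the identity splits by bilinearity into four independent pieces, one for each choice in $\{A,I\}\times\{A,I\}$; setting three of the four scalars to zero isolates each piece. These four pieces read $(am)b=a(mb)$, $(xm)y=x(my)$, $(am)y=a(my)$ and $(xm)b=x(mb)$, which are exactly the $A$-bimodule, $I$-bimodule, $A$-$I$-bimodule and $I$-$A$-bimodule axioms; their conjunction recovers the full commutation identity. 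No step is genuinely deep; the only real obstacle is tidy bookkeeping of which axiom or compatibility relation is being invoked at each expansion.
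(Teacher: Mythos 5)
Your proof is correct and follows essentially the same route as the paper: restrict along the canonical embeddings for the forward direction (using $(a,0)(0,x)=(0,ax)$ and $(0,x)(a,0)=(0,xa)$), define $(a,x)\cdot m=am+xm$ for the converse, and expand the associativity/bimodule identity into the four pieces indexed by $\{A,I\}\times\{A,I\}$. Nothing is missing.
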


\begin{proof}
As stated before, we always identify $(a,0)$ with $a$, $(0,x)$ with $x$, where $a\in A$ and $x\in I$. 
Hence $(a,x)=a+x$ in $A\ltimes_d I$.

Suppose that $M$ is a left $A\ltimes_d I$-module. Since $A$ and $I$ are subalgebras of $A\ltimes_d I$, 
$M$ is both a left $A$-module and a left $I$-module. Moreover, for any $a\in A$, $x\in I$ and $m\in M$, we have
$a(xm)=(a, 0)((0, x)m)=((a, 0)(0, x))m=(0, ax)m=(ax)m$ and $x(am)=(0, x)((a, 0)m)=((0, x)(a, 0))m=(0, xa)m=(xa)m$.
Conversely, suppose that $M$ is both a left $A$-module and a left $I$-module such that $a(xm)=(ax)m$ and $x(am)=(xa)m$
for any $a\in A$, $x\in I$ and $m\in M$. Define a left action of $A\ltimes_d I$ on $M$ by
$(a, x)m:=am+xm)$, $a\in A$, $x\in I$, $m\in M$. Then for any $a, b\in A$, $x, y\in I$ and $m\in M$, we have
$(a,x)((b,y)m)=(a,x)(bm+ym)=a(bm+ym)+x(bm+ym)=a(bm)+a(ym)+x(bm)+x(ym)
=(ab)m+(ay)m+(xb)m+(xy)m=(ab)m+(ay+xb+xy)m=(ab,ay+xb+xy)m=((a,x)(b,y))m$. 
Hence $M$ is a left $A\ltimes_d I$-module. For the right module case, the proof  is similar.
By a similar argument, one can check the last statement of the proposition.
\end{proof}

%

\begin{proposition}\prlabel{}
Let $A_1,A_2,A_3$ be algebras and $(A_1,A_2)\in \mathcal{DPA}$. Then $(A_1\ltimes_d A_2, A_3)\in \mathcal{DPA} $ if and only if $(A_1, A_3),(A_2,A_3) \in \mathcal{DPA} $, $A_3$ is both an $A_1$-$A_2$-bimodule and an $A_2$-$A_1$-bimodule, and $a_1(a_2a_3)=(a_1a_2)a_3$, $a_2(a_1a_3)=(a_2a_1)a_3$, $(a_3a_2)a_1=a_3(a_2a_1)$, $(a_3a_1)a_2=a_3(a_1a_2)$, $\forall a_i\in A_i$, $i=1,2,3$. Moreover, if this is the case, then $(A_1, A_2 \ltimes_d A_3)\in \mathcal{DPA} $ and 
$(A_1\ltimes_dA_2)\ltimes_dA_3\cong A_1\ltimes_d(A_2\ltimes_dA_3)$ as algebras.
\end{proposition}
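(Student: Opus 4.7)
The plan is to reduce everything to \prref{prop1.7} applied to $A_3$ viewed as a bimodule over $B := A_1 \ltimes_d A_2$. By the bimodule part of that proposition, a $B$-bimodule structure on $A_3$ is equivalent to the data of an $A_1$-bimodule, $A_2$-bimodule, $A_1$-$A_2$-bimodule and $A_2$-$A_1$-bimodule structure on $A_3$, together with exactly the four ``mixed'' compatibility conditions $a_1(a_2 a_3) = (a_1 a_2) a_3$, $a_2(a_1 a_3) = (a_2 a_1) a_3$, $(a_3 a_2) a_1 = a_3 (a_2 a_1)$ and $(a_3 a_1) a_2 = a_3 (a_1 a_2)$ listed in the statement. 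Moreover, since $B = A_1 \oplus A_2$ as a vector space, the three Dorroh-pair identities $\alpha(a_3 a_3') = (\alpha a_3) a_3'$, $(a_3 \alpha) a_3' = a_3(\alpha a_3')$ and $(a_3 a_3') \alpha = a_3(a_3' \alpha)$ are linear in $\alpha \in B$ and split into the corresponding identities for $\alpha \in A_1$ and $\alpha \in A_2$, which together say precisely that $(A_1, A_3)$ and $(A_2, A_3)$ lie in $\mathcal{DPA}$. This yields the ``iff'': the forward direction is restriction to each summand; the backward direction defines the $B$-actions componentwise via $(a_1 + a_2) a_3 := a_1 a_3 + a_2 a_3$ and invokes \prref{prop1.7}.

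For the ``moreover'' part, I first show $(A_1, A_2 \ltimes_d A_3) \in \mathcal{DPA}$. Equip $A_2 \ltimes_d A_3$ with the componentwise $A_1$-actions $a_1 \cdot (a_2, a_3) := (a_1 a_2, a_1 a_3)$ and symmetrically on the right; this is a well-defined $A_1$-bimodule because of the given $A_1$-bimodule structures on $A_2$ and $A_3$. Verifying the Dorroh-pair conditions between $A_1$ and $A_2 \ltimes_d A_3$ then reduces, after expanding \equref{d2}, to the hypotheses on $(A_1, A_2)$ and $(A_1, A_3)$ together with cross-terms such as $a_1(a_3 a_2') = (a_1 a_3) a_2'$, which follows from the $A_1$-$A_2$-bimodule structure on $A_3$ established in the first part, and $a_1(a_2 a_3') = (a_1 a_2) a_3'$, which is one of the four mixed conditions. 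The remaining two Dorroh identities are handled symmetrically.

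For the isomorphism, both $(A_1 \ltimes_d A_2) \ltimes_d A_3$ and $A_1 \ltimes_d (A_2 \ltimes_d A_3)$ have underlying vector space $A_1 \oplus A_2 \oplus A_3$, so I take $\eta$ to be the natural vector-space identification. Expanding \equref{d2} twice on each side yields the same sum of monomial contributions indexed by pairs $A_i \times A_j$, and matching the summands is immediate once all the relevant compatibilities have been collected.

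The main obstacle is purely bookkeeping: the argument is conceptually a single decomposition-plus-linearity observation, but the verification involves many distinct compatibility identities, and one must track which hypothesis supplies each cross-term. The most error-prone step is the Dorroh condition for $(A_1, A_2 \ltimes_d A_3)$, where cross-terms mixing the $A_1$-action with products like $a_3 a_2'$ force one to distinguish between the mixed conditions listed in the statement and the compatibilities coming from the various bimodule structures on $A_3$.
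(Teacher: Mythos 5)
Your proof is correct and takes essentially the same route as the paper, whose entire argument is to cite \prref{prop1.7} for the first statement and to check that $((a_1,a_2),a_3)\mapsto(a_1,(a_2,a_3))$ is an algebra homomorphism for the second; you have simply filled in the bookkeeping (splitting the $B$-bimodule structure and the three Dorroh identities by linearity in $\alpha\in A_1\oplus A_2$) that the paper leaves to the reader.
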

\begin{proof}
The first statement follows from a straightforward verification similar to \prref{prop1.7}.
For the second statement, one can easily check that the linear isomorphism
$$(A_1\ltimes_d A_2) \ltimes_d A_3\ra A_1\ltimes_d(A_2\ltimes_d A_3), \ ((a_1,a_2),a_3)\mapsto (a_1,(a_2,a_3))$$
is an algebra homomorphism.
\end{proof}

\begin{remark}
The algebras $(A_1\ltimes_dA_2)\ltimes_dA_3$ and  $A_1\ltimes_d(A_2\ltimes_dA_3)$ given in
\prref{prop1.7} are simply written as $A_1\ltimes_dA_2\ltimes_dA_3$. 
More generally, one can define Dorroh $n$-tuple of algebras
$(A_1, A_2, \cdots, A_n)$ and form an iterated Dorroh extension of algebra  
$A_1\ltimes_d A_2\ltimes_d \cdots \ltimes_d A_n$
\end{remark}

\section{Dorroh extensions of coalgebras}

In this section, we study Dorroh extensions of coalgebras.
For simplicity, we use $1$ denote the identity map on some spaces.
For a coalgebra $C$ and $c\in C$, we write $\D(c)=\sum c_1\ot c_2$.
If $(M, \rho)$ is a right (resp., left) $C$-comodule, we write $\rho(m)=\sum m_{(0)}\ot m_{(1)}$
(resp.,  $\rho(m)=\sum m_{(-1)}\ot m_{(0)}$), $m\in M$.

\begin{definition}
Let $C$ and $D$ be two coalgebras. $D$ is called {\it a coalgebra Dorroh extension of $C$}
if $C$ is a subcoalgebra  of $D$ and there is a coideal $P$ of $D$ such that $D=C\op P$ as vector spaces.
In this case, $D$ is also called {\it a coalgebra Dorroh extension of $C$ by $P$}.
\end{definition}

\begin{definition}
Let $C$ and $P$ be two coalgebras. $(C,P)$ is called {\it a Dorroh pair of coalgebras} if $P$ is a $C$-bicomodule
such that the comodule structures are compatible with the comultiplication of $P$, 
that is, the following three equations hold for all $p\in P$:
\begin{eqnarray}
   \eqlabel{e3}\sum p_1\ot p_{2(0)}\ot p_{2(1)}&=&\sum p_{(0)1}\ot p_{(0)2}\ot p_{(1)}, \\
  \eqlabel{e4}  \sum p_{1(-1)}\ot p_{1(0)}\ot p_2&=&\sum p_{(-1)}\ot p_{(0)1}\ot p_{(0)2}, \\
  \eqlabel{e5} \sum p_{1(0)}\ot p_{1(1)}\ot p_{2}&=&\sum p_1\ot p_{2(-1)}\ot p_{2(0)}.
\end{eqnarray}
In this case, $P$ is called {\it a $C$-Coalgebra}. 
 $P'$ is called {\it a $C$-subCoalgebra of $P$} if
$P'\ss P$ is a subcoalgebra and a subbicomodule. $Q$ is called {\it a $C$-Coideal of $P$} if $Q\ss P$
is a coideal and a subbicomodule.
\end{definition}

Let $(C,P)$ be a Dorroh pair of coalgebras. Then one can form a coalgebra $C\ltimes_dP$ as follows:
$C\ltimes_dP=C\op P$ as vector spaces, the comultiplication is defined by
\begin{equation}\eqlabel{e6}
\begin{split}
 \D(c,p)=&\sum (c_1,0)\ot (c_2,0)+\sum (p_{(-1)},0)\ot (0,p_{(0)})\\
 &+\sum (0,p_{(0)})\ot (p_{(1)},0)+\sum (0,p_1)\ot (0,p_2)
 \end{split}
\end{equation}
for any $(c,p)\in C\ltimes_d P$. Clearly, $\D$ is linear map from $C\ltimes_dP$
to $(C\ltimes_dP)\ot (C\ltimes_dP)$. We need to show the coassociativity of the comultiplication.

\begin{lemma}\lelabel{lem2.3}
The comultiplication $\D$ defined by \equref{e6} is coassociative.
\end{lemma}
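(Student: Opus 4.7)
The plan is to verify $(\Delta\ot 1)\Delta(c,p)=(1\ot\Delta)\Delta(c,p)$ by direct computation, matching terms on the two sides according to the pattern of $C$- versus $P$-components in each of the three tensor slots. By linearity it suffices to treat $(c,0)$ and $(0,p)$ separately.

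The case $(c,0)$ is immediate: $\D(c,0)=\sum(c_1,0)\ot(c_2,0)$ only produces pattern $(C,C,C)$ after a second application of $\D$, and equality reduces at once to coassociativity of $\D_C$.

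The main work is $(0,p)$. Expanding $\D(0,p)$ via \equref{e6} gives three summands, each of which contributes several terms when $\D\ot 1$ or $1\ot\D$ is applied, using \equref{e6} again on every $C$- or $P$-tensorand. I would collect the resulting terms by their pattern $(X_1,X_2,X_3)$ with $X_i\in\{C,P\}$, noting that the mixed pattern $(C,C,C)$ does not appear and each of the seven remaining patterns does. The matching then proceeds as follows:
\begin{itemize}
\item $(C,C,P)$: follows from coassociativity of the left $C$-coaction on $P$.
\item $(P,C,C)$: follows from coassociativity of the right $C$-coaction on $P$.
\item $(C,P,C)$: follows from the bicomodule compatibility of the left and right $C$-coactions on $P$.
\item $(C,P,P)$: follows from \equref{e4}.
\item $(P,C,P)$: follows from \equref{e5}.
\item $(P,P,C)$: follows from \equref{e3}.
\item $(P,P,P)$: follows from coassociativity of $\D_P$.
\end{itemize}
Summing over all patterns then yields $(\D\ot 1)\D(0,p)=(1\ot\D)\D(0,p)$.

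No genuinely hard step is present; the obstacle is purely bookkeeping. The trickiest point is to be careful that each of the three compatibility axioms \equref{e3}, \equref{e4}, \equref{e5} is used in exactly the slot where it applies, and that the bicomodule identity (which is \emph{not} one of \equref{e3}--\equref{e5} but part of the ambient assumption that $P$ is a $C$-bicomodule) handles the $(C,P,C)$ pattern. Once these seven identifications are written out, coassociativity follows.
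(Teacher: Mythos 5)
Your proposal is correct and follows essentially the same route as the paper: expand $(\D\ot 1)\D(0,p)$ and $(1\ot\D)\D(0,p)$ into their seven summands and match them pattern by pattern, with the $(C,P,P)$, $(P,C,P)$, $(P,P,C)$ terms handled by \equref{e4}, \equref{e5}, \equref{e3} respectively and the remaining patterns by the comodule and bicomodule axioms, exactly as in the paper's computation. Your pattern-by-pattern bookkeeping and the assignment of each axiom to its slot agree with the paper's term-by-term identification.
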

\begin{proof}
Clearly, $({\rm id}\ot \D)\D(c, 0)=(\D\ot{\rm id})\D(c, 0)$ for any $c\in C$.
Now let $p\in P$. Then
$$\begin{array}{rl}
({\rm id}\ot \D)\D(0, p)
=&\sum (p_{(-1)},0)\ot\D(0,p_{(0)})
+\sum (0,p_{(0)})\ot \D(p_{(1)},0)+\sum (0,p_1)\ot \D(0,p_2)\\
=&\sum (p_{(-2)},0)\ot(p_{(-1)},0)\ot\D(0,p_{(0)})
+\sum (p_{(-1)},0)\ot (0,p_{(0)})\ot \D(p_{(1)},0)\\
&+\sum (p_{(-1)},0)\ot (0,p_{(0)1})\ot (0,p_{(0)2})
+\sum (0,p_{(0)})\ot (p_{(1)},0)\ot (p_{(2)},0)\\
&+\sum (0,p_1)\ot(p_{2(-1)},0)\ot(0,p_{2(0)})
+\sum (0,p_1)\ot(0,p_{2(0)})\ot (p_{2(1)},0)\\
&+\sum (0,p_1)\ot(0,p_{2})\ot (0,p_{3})\\
\end{array}$$
and
$$\begin{array}{rl}
(\D\ot{\rm id})\D(0, p)
=&\sum \D(p_{(-1)},0)\ot(0,p_{(0)})
+\sum \D(0,p_{(0)})\ot (p_{(1)},0)+\sum \D(0,p_1)\ot (0,p_2)\\
=&\sum (p_{(-2)},0)\ot(p_{(-1)},0)\ot\D(0,p_{(0)})
+\sum (p_{(-1)},0)\ot (0,p_{(0)})\ot (p_{(1)},0)\\
&+\sum (0,p_{(0)})\ot (p_{(1)},0)\ot (p_{(2)},0)
+\sum (0, p_{(0)1})\ot (0,p_{(0)2})\ot (p_1,0)\\
&+\sum (p_{1(-1)},0)\ot(0,p_{1(0)})\ot (0, p_2)
+\sum (0,p_{1(0)})\ot(p_{1(1)},0)\ot (0,p_{2})\\
&+\sum (0,p_1)\ot(0,p_{2},0)\ot(0,p_{3}).\\
\end{array}$$
Hence $({\rm id}\ot \D)\D(0, p)=(\D\ot{\rm id})\D(0, p)$ by Eqs.\equref{e3}-\equref{e5}.
Thus, $({\rm id}\ot \D)\D(c, p)=(\D\ot{\rm id})\D(c, p)$ for any $(c,p)\in C\ltimes_dP$.
\end{proof}

$C$ and $P$ can be canonically embedded into $C\ltimes_d P$ as subspaces via
$\tau_C: C\hookrightarrow C\ltimes_dP$, $c\mapsto (c,0)$ and
$\tau_P: P\hookrightarrow C\ltimes_dP$, $p\mapsto (0,p)$. 
In this case, we have the following proposition.

\begin{proposition}\prlabel{}
Let $(C, P)$ be a Dorroh pair of coalgebras. Then  $C\ltimes_d P$ is a coalgebra Dorroh extension of $C$ by $P$.
Furthermore, If $C$ is a counital coalgebra with counit $\ep_C$ and $P$ is a counital $C$-bicomodule,
i.e., $\sum\ep_C(p_{(-1)})p_{(0)}=p=\sum p_{(0)}\ep_C(p_{(1)})$ for all $p\in P$,
then $C\ltimes_d P$ is also a counital coalgebra with counit $(\ep_C, 0)$.
\end{proposition}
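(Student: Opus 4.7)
The plan is to verify the three conditions defining a coalgebra Dorroh extension and then, under the extra hypotheses, check that $(\ep_C,0)$ acts as a counit. Coassociativity of $\D$ on $C\ltimes_d P$ is already supplied by \leref{lem2.3}, so $C\ltimes_d P$ is a (not necessarily counital) coalgebra; and $C\ltimes_d P = \tau_C(C)\op \tau_P(P)$ holds by construction. It remains to show $\tau_C(C)$ is a subcoalgebra, $\tau_P(P)$ is a coideal, and then handle the counit.

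First I would read $\D(c,0)$ off from \equref{e6}: since the comodule structure maps of $P$ do not enter (the $P$-component is $0$) and the last three summands vanish, one is left with $\D(c,0)=\sum(c_1,0)\ot(c_2,0)\in \tau_C(C)\ot\tau_C(C)$. This simultaneously shows that $\tau_C(C)$ is a subcoalgebra and that $\tau_C:C\ra C\ltimes_d P$ is a coalgebra embedding.

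Next I would read off $\D(0,p)$, again from \equref{e6}: its three summands lie, respectively, in $\tau_C(C)\ot\tau_P(P)$, $\tau_P(P)\ot\tau_C(C)$, and $\tau_P(P)\ot\tau_P(P)$. All three are contained in $\tau_P(P)\ot(C\ltimes_d P)+(C\ltimes_d P)\ot\tau_P(P)$, so $\tau_P(P)$ is a coideal. Combined with the decomposition $C\ltimes_d P=\tau_C(C)\op\tau_P(P)$, this gives the Dorroh extension structure.

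For the counital statement, define $\tilde\ep:C\ltimes_d P\ra k$ by $\tilde\ep(c,p)=\ep_C(c)$ and verify $(\tilde\ep\ot 1)\D=1=(1\ot\tilde\ep)\D$ separately on $(c,0)$ and $(0,p)$. On $(c,0)$ this is immediate from $\ep_C$ being a counit of $C$. On $(0,p)$, applying $\tilde\ep\ot 1$ to \equref{e6} annihilates the second and third summands (their left tensor factors lie in $\tau_P(P)$), leaving $\sum\ep_C(p_{(-1)})(0,p_{(0)})=(0,p)$ by the left counitality of the $C$-bicomodule $P$; the right-hand identity follows symmetrically from the right counitality. The computation is essentially bookkeeping and I anticipate no genuine obstacle; the only point worth flagging is that the counitality hypothesis on the bicomodule $P$ is used precisely, and only, to collapse the single surviving summand of $\D(0,p)$ back to $(0,p)$.
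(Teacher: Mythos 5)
Your proposal is correct and follows the same route as the paper's proof: invoke \leref{lem2.3} for coassociativity, read off from \equref{e6} that $\tau_C(C)$ is a subcoalgebra and $\tau_P(P)$ is a coideal, and verify the counit axioms directly (the paper leaves these last two steps as "easy to see" and "a straightforward verification", which your bookkeeping correctly fills in).
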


\begin{proof}
By 	\leref{lem2.3}, $C\ltimes_d P$ is a coalgebra. Via the canonical embeddings $\tau_C$ and $\tau_P$ given above,
$C$ and $P$ are subspaces of $C\ltimes_d P$ and $C\ltimes_d P$ is equal to the direct sum of the two subspaces. 
By Eq.\equref{e6}, it is easy to see that $C$ is a subcoalgebra of $C\ltimes_d P$
and $P$ is a coideal of $C\ltimes_d P$. Hence  $C\ltimes_d P$ is a coalgebra Dorroh extension of $C$ by $P$.
The second statement follows from a straightforward verification.
\end{proof}

Note that the comultiplication $\D_P$ of $P$ is different from $\D_{C\ltimes_d P}|_P$, the restriction on $P$ of the
comultiplication of $C\ltimes_d P$. If it doesn't make confusion, we will regard $C\subseteq  C\ltimes_d P$
and $P\subseteq C\ltimes_d P$.

Let $D$ be a coalgebra Dorroh extension of $C$ by $P$. Then $C$ is a subcoalgebra  of $D$, 
$P$ is a coideal of $D$ and $D=C\op P$ as vector spaces. 
Let $\pi_C: D\ra C$ and $\pi_P: D\ra P$ be the corresponding linear projections,
and let $\tau_C: C\hookrightarrow D$ and $\tau_P: P\hookrightarrow D$ be the inclusion maps.
Then $\pi_C$ and $\tau_C$ are coalgebra homomorphisms.
Define linear maps $\D_P=(\pi_P\ot\pi_P)\D\tau_P: P\ra P\ot P$, $\rho_l=(\pi_C\ot\pi_P)\D\tau_P: P\ra C\ot P$ 
and $\rho_r=(\pi_P\ot\pi_C)\D\tau_P: P\ra P\ot C$, where $\D$ is the comultiplication of $D$.
Then we have the following proposition.

\begin{proposition}\prlabel{2.5}
Let $D$ be a coalgebra Dorroh extension of $C$ by $P$. Then $(C, P)$ is a Dorroh pair of coalgebras
and $D\cong C\ltimes_dP$ as coalgebras, where the comultiplication of $P$, 
the left and right $C$-comodule structure maps of $P$ are $\D_P$, 
$\rho_l$ and $\rho_r$ given above, respectively. 
\end{proposition}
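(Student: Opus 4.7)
The plan is to reduce every claim in the proposition to a single input, namely the coassociativity of $\D$ on $D$ applied to elements of $P$. The preparatory observation is that, since $C$ is a subcoalgebra, $\D(C)\ss C\ot C$, and since $P$ is a coideal, $\D(P)\ss P\ot D+D\ot P$. Using the decomposition $D\ot D=(C\ot C)\op(C\ot P)\op(P\ot C)\op(P\ot P)$, this forces $\D(P)\ss(C\ot P)\op(P\ot C)\op(P\ot P)$; in other words, the $C\ot C$-component of $\D(p)$ vanishes for every $p\in P$. Consequently, for each $p\in P$ there is a unique decomposition $\D(p)=\rho_l(p)+\rho_r(p)+\D_P(p)$ with summands in $C\ot P$, $P\ot C$, and $P\ot P$ respectively, and these three maps are exactly the ones defined before the proposition via the projections $\pi_C,\pi_P$.

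The central step is then to apply $(\D\ot{\rm id})\D(p)=({\rm id}\ot\D)\D(p)$ for $p\in P$, expand both sides through the decomposition $D^{\ot 3}=\op_{X,Y,Z\in\{C,P\}}X\ot Y\ot Z$ into eight summands, and compare components. The $C\ot C\ot C$ component is $0=0$. Each of the remaining seven projections reads off exactly one of the required structural identities: the $C\ot C\ot P$ (resp.\ $P\ot C\ot C$) component yields coassociativity of $\rho_l$ (resp.\ $\rho_r$) as a left (resp.\ right) $C$-coaction on $P$; the $C\ot P\ot C$ component yields the bicomodule compatibility $({\rm id}\ot\rho_r)\rho_l=(\rho_l\ot{\rm id})\rho_r$; the $C\ot P\ot P$, $P\ot P\ot C$ and $P\ot C\ot P$ components yield \equref{e4}, \equref{e3} and \equref{e5} respectively; and the $P\ot P\ot P$ component yields coassociativity of $\D_P$. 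Together these are precisely the conditions for $(C,P)$ to be a Dorroh pair of coalgebras.

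For the isomorphism, I would define $\eta:D\ra C\ltimes_dP$ by $\eta(c+p)=(c,p)$, which is manifestly a linear bijection. To check that $\eta$ is a coalgebra map, compare $(\eta\ot\eta)\D(c+p)$ with $\D_{C\ltimes_dP}(c,p)$: the $C$-part is immediate from $\D(c)\in C\ot C$, and the $P$-part is a term-by-term match between the decomposition $\D(p)=\rho_l(p)+\rho_r(p)+\D_P(p)$ established above and the four-term formula \equref{e6} (the $C\ot C$-term of \equref{e6} is absent when acting on pure $p\in P$).

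The main obstacle will be the careful bookkeeping in the eight-way projection: when one computes $({\rm id}\ot\D)(\rho_l(p))$, $(\D\ot{\rm id})(\rho_r(p))$, $({\rm id}\ot\D)(\D_P(p))$ and their companions, the inner $\D$ always acts on a $P$-factor and must itself be re-decomposed as $\rho_l+\rho_r+\D_P$, so each nontrivial component of coassociativity mixes the three maps $\D_P,\rho_l,\rho_r$ in a specific combination. Once this decomposition is laid out systematically, however, each of the seven identities falls out by reading coefficients in the appropriate summand.
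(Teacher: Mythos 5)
Your proposal is correct and follows essentially the same route as the paper: both arguments rest on the observation that the $C\ot C$-component of $\D(p)$ vanishes for $p\in P$, decompose $\D|_P$ as $\rho_l+\rho_r+\D_P$, and extract all seven structural identities (the two comodule coassociativities, the bicomodule compatibility, Eqs.~\equref{e3}--\equref{e5}, and coassociativity of $\D_P$) by projecting the coassociativity of $\D$ onto the summands of $D^{\ot 3}$, the paper writing these projections explicitly as compositions such as $(\pi_P\ot\pi_P\ot\pi_P)(\D\ot 1)\D\tau_P$. The isomorphism is the same linear identification (the paper's $\phi$ is the inverse of your $\eta$), verified by the same term-by-term comparison with \equref{e6}.
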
 

\begin{proof}
Since $C$ is a subcoalgebra of $D$, $(\pi_P\ot\pi_P)\D\tau_C=0$. Hence
$(\pi_P\ot\pi_P)\D\tau_P\pi_P=(\pi_P\ot\pi_P)\D(\tau_C\pi_C+\tau_P\pi_P)=(\pi_P\ot\pi_P)\D$.
Thus, we have 
$$\begin{array}{rl}
(\D_P\ot 1)\D_P=&((\pi_p\ot\pi_P)\D\tau_P\ot 1)(\pi_P\ot\pi_P)\D\tau_P
=((\pi_p\ot\pi_P)\D\tau_P\pi_P\ot \pi_P)\D\tau_P\\
=&((\pi_p\ot\pi_P)\D\ot \pi_P)\D\tau_P
=(\pi_p\ot\pi_P\ot\pi_P)(\D\ot 1)\D\tau_P,\\
\end{array}$$
and similarly $(1\ot\D_P)\D_P=(\pi_p\ot\pi_P\ot\pi_P)(1\ot\D)\D\tau_P$.
It follows that $(\D_P\ot 1)\D_P=(1\ot\D_P)\D_P$.
Similarly, one can check that $(1\ot\rho_l)\rho_l=(\D_C\ot 1)\rho_l$,
$(\rho_r\ot 1)\rho_r=(1\ot\D_C)\rho_r$, $(1\ot\rho_r)\rho_l=(\rho_l\ot 1)\rho_r$, 
$(\rho_l\ot 1)\D_P=(1\ot\D_P)\rho_l$, $(1\ot\rho_r)\D_P=(\D_P\ot 1)\rho_r$ and
$(\rho_r\ot 1)\D_P=(1\ot\rho_l)\D_P$. Therefore, $(C,P)$ is a Dorroh pair of coalgebras.

For any $x\in D$, we write $\D(x)=\sum x_1\ot x_2$. Then
$\D_P(p)=\sum\pi_P(p_1)\ot\pi_P(p_2)$, $\rho_l(p)=\sum\pi_C(p_1)\ot\pi_P(p_2)$
and $\rho_r(p)=\sum\pi_P(p_1)\ot\pi_C(p_2)$ for all $p\in P$.  Define a map
$\phi: C\ltimes_dP\ra D$ by $\phi(c, p)=c+p$, $(c,p)\in C\ltimes_dP$.
Clearly, $\phi$ is a linear isomorphism. 
Since $P$ is a coideal of $D$, $\D(P)\subseteq C\ot P+P\ot C$.
Hence $(\pi_C\ot\pi_C)\D(P)=0$ and so $\sum\pi_C(p_1)\ot\pi_C(p_2)=0$ for any $p\in P$.
Let $(c,p)\in C\ltimes_dP$.  Then
$$\begin{array}{rl}
\sum p_1\ot p_2=&\sum(\pi_C(p_1)+\pi_P(p_1))\ot(\pi_C(p_2)+\pi_P(p_2))\\
=&\sum\pi_C(p_1)\ot\pi_C(p_2)+\sum\pi_C(p_1)\ot\pi_P(p_2)\\
&+\sum \pi_P(p_1)\ot\pi_C(p_2)+\sum \pi_P(p_1)\ot \pi_P(p_2)\\
=&\sum\pi_C(p_1)\ot\pi_P(p_2)
+\sum \pi_P(p_1)\ot\pi_C(p_2)+\sum \pi_P(p_1)\ot \pi_P(p_2)\\
\end{array}$$
and hence
$$\begin{array}{rl}
(\phi\ot\phi)\D(c,p)=&(\phi\ot\phi)(\sum(c_1, 0)\ot(c_2,0)+\sum(\pi_C(p_1),0)\ot(0,\pi_P(p_2))\\
&+\sum (0, \pi_P(p_1))\ot(\pi_C(p_2), 0)+\sum(0, \pi_P(p_1))\ot(0, \pi_P(p_2)))\\
=&\sum c_1\ot c_2+\sum\pi_C(p_1)\ot\pi_P(p_2)\\
&+\sum \pi_P(p_1)\ot\pi_C(p_2)+\sum \pi_P(p_1)\ot \pi_P(p_2)\\
=&\sum c_1\ot c_2+\sum p_1\ot p_2\\
=&\D\phi(c,p).
\end{array}$$
It follows that $\phi$ is a coalgebra homomorphism, and so 
$D\cong C\ltimes_dP$ as coalgebras.
\end{proof}

Now we investigate the structure of $C\ltimes_d P$ in case $P$ is a counital coalgebra.
The following lemma is obvious.

\begin{lemma}\lelabel{2.6}
Let $(C,P)$ be a Dorroh pair of coalgebras and suppose that $P$ has a counit $\ep_P$. Then
the following two diagrams commute:
$$\xymatrix{
  P\ot P \ar[d]_{\rho_r\ot 1} \ar[r]^{1\ot \ep_P}
                & P \ar[d]^{\rho_r}  \\
  (P\ot C)\ot P \ar [r]_{1\ot 1\ot \ep_P}
                & P\ot C  ,           }\quad
                \xymatrix{
  P\ot P \ar[d]_{1\ot \rho_l} \ar[r]^{\ep_P \ot 1}
                &  P \ar[d]^{\rho_l}  \\
  P\ot (C\ot P) \ar [r]_{\ep_P\ot 1\ot 1}
                & C\ot P   .           }$$
\end{lemma}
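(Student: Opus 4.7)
The plan is to verify commutativity of both squares by a direct element chase, observing that each diagram simply expresses the fact that a counit $\ep_P$ applied in one tensor factor commutes with any linear operation acting on a disjoint factor. I would fix an elementary tensor $p\ot q\in P\ot P$ and adopt the Sweedler notation $\rho_r(p)=\sum p_{(0)}\ot p_{(1)}$ for the right $C$-comodule structure and $\rho_l(q)=\sum q_{(-1)}\ot q_{(0)}$ for the left one.

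For the first diagram, going right then down yields $p\ot q\mapsto p\,\ep_P(q)\mapsto \sum p_{(0)}\ot p_{(1)}\,\ep_P(q)$, while going down then right yields $p\ot q\mapsto \sum p_{(0)}\ot p_{(1)}\ot q\mapsto \sum p_{(0)}\ot p_{(1)}\,\ep_P(q)$; the two outputs coincide. A symmetric computation shows that both composites of the second square produce $\sum \ep_P(p)\,q_{(-1)}\ot q_{(0)}$, so the second diagram commutes as well.

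The striking feature of this verification is what it does \emph{not} require: neither the coassociativity of $\D_P$ nor any of the bicomodule compatibilities \equref{e3}--\equref{e5} is invoked, and of course nothing about the Dorroh construction $C\ltimes_d P$ enters either. Commutativity follows purely from the interchange property of tensor products, namely that linear operations acting on disjoint tensor factors commute. This is precisely why the authors declare the lemma obvious; once $\ep_P$ is at hand, the symbols in the diagrams are well-defined and no substantive verification remains. The main obstacle is therefore not mathematical but rhetorical: to explain why such a transparent statement is worth recording at all. Presumably the lemma is isolated here because it will be combined with the counit axioms for $C$ and for $P$ in the sequel (in analogy with \prref{prop1.1}) to pin down the coalgebra structure of $C\ltimes_d P$ when $P$ is counital.
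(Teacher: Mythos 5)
Your proof is correct and takes the same (implicit) route as the paper, which simply declares the lemma obvious: each square is an instance of the interchange law $(f\ot 1)(1\ot g)=(1\ot g)(f\ot 1)$ for linear maps acting on disjoint tensor factors, and your element chase with $p\ot q$ supplies exactly that verification. You are also right that none of Eqs.~\equref{e3}--\equref{e5} is needed here; they only enter later, in \prref{2.4} and \prref{prop2.5}, where this lemma is combined with the counit axiom for $P$.
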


\begin{proposition}\prlabel{2.4}
Let $(C,P)$ be a Dorroh pair of coalgebras and suppose that $P$ has a counit $\ep_P$. 
Then $\sum p_{(-1)}\ep_P(p_{(0)})=\sum \ep_P(p_{(0)})p_{(1)}$ for all $p\in P$.
\end{proposition}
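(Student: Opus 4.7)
The goal is to prove $f(p)=g(p)$, where I set $f(p):=\sum p_{(-1)}\ep_P(p_{(0)})$ and $g(p):=\sum \ep_P(p_{(0)})p_{(1)}$. My plan is to show that both $\sum p_1\ot f(p_2)$ and $\sum p_1\ot g(p_2)$ equal $\rho_r(p)=\sum p_{(0)}\ot p_{(1)}$, and then strip off the first Sweedler leg via the counit of $P$ to conclude $f(p)=g(p)$.

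For the first half, I would apply $1\ot 1\ot \ep_P$ to the compatibility equation \equref{e5}. The right-hand side yields directly $\sum p_1\ot p_{2(-1)}\ep_P(p_{2(0)})=\sum p_1\ot f(p_2)$. The left-hand side becomes $\sum p_{1(0)}\ot p_{1(1)}\ep_P(p_2)$, which by the $k$-linearity of $\rho_r$ (i.e., the commutativity of the first diagram of \leref{2.6} applied to $\D_P(p)$) equals $\rho_r\bigl(\sum p_1\ep_P(p_2)\bigr)=\rho_r(p)$. Hence $\rho_r(p)=\sum p_1\ot f(p_2)$.

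For the second half, I would apply $1\ot \ep_P\ot 1$ to \equref{e3}. The left-hand side becomes $\sum p_1\ot \ep_P(p_{2(0)})p_{2(1)}=\sum p_1\ot g(p_2)$ by definition of $g$. The right-hand side, $\sum p_{(0)1}\ep_P(p_{(0)2})\ot p_{(1)}$, collapses via the counit identity $\sum p_{(0)1}\ep_P(p_{(0)2})=p_{(0)}$ to $\sum p_{(0)}\ot p_{(1)}=\rho_r(p)$. Thus $\rho_r(p)=\sum p_1\ot g(p_2)$.

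Combining gives $\sum p_1\ot f(p_2)=\sum p_1\ot g(p_2)$, and a final application of $\ep_P\ot 1$ using the counit identity $\sum \ep_P(p_1)p_2=p$ produces $f(p)=g(p)$, as required. No serious obstacle is anticipated: the proof reduces to a short Sweedler-notation computation, and the only subtlety is choosing which tensor leg of each compatibility axiom to contract with $\ep_P$ so that $\D_P$ collapses on exactly one side and $\rho_r(p)$ emerges on the other.
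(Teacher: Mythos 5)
Your proof is correct and takes essentially the same route as the paper's: both hinge on contracting the compatibility equation \equref{e5} with $\ep_P$ on its two $P$-legs so that $\rho_l$ and $\rho_r$ meet, the paper applying $\ep_P\ot 1\ot \ep_P$ in a single pass (organized via \leref{2.6}) where you factor it into $1\ot 1\ot \ep_P$ followed by $\ep_P\ot 1$. Your second half via \equref{e3} is valid but redundant, since $(\ep_P\ot 1)\rho_r(p)=\sum \ep_P(p_{(0)})p_{(1)}$ is already the right-hand side of the claim by definition, so your first half together with the final counit contraction already finishes the argument.
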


\begin{proof}
Let $p\in P$. Then by \leref{2.6}, we have
\begin{equation*}
\begin{split}
\sum p_{(0)}\ot p_{(1)}&=\rho_r(p)=\rho_r(1\ot \ep_P)\D_P(p)=(1\ot 1\ot \ep_P)(\rho_r\ot 1)\D_P(p)\\
&=(1\ot 1\ot \ep_P)(p_{1(0)}\ot p_{1(1)}\ot p_2),\\
\sum p_{(-1)}\ot p_{(0)}&=\rho_l(p)=\rho_l(\ep_P\ot 1)
\D_P(p)=(\ep_P\ot 1\ot 1)(1\ot \rho_l)\D(p)\\
&=(\ep_P\ot 1\ot 1)(p_1\ot p_{2(-1)}\ot p_{2(0)}).
\end{split}
\end{equation*}
Thus, by Eq.\equref{e5}, one gets
\begin{equation*}
\begin{split}
\sum p_{(-1)}\ep_P(p_{(0)})&=(1\ot \ep_P)(\sum p_{(-1)}\ot p_{(0)})=(1\ot \ep_P)(\ep_P\ot 1\ot 1)(\sum p_1\ot p_{2(-1)}\ot p_{2(0)})\\
&=(\ep_P\ot 1\ot \ep_P)(\sum p_1\ot p_{2(-1)}\ot p_{2(0)})=(\ep_P\ot 1\ot \ep_P)(\sum p_{1(0)}\ot p_{1(1)}\ot p_2)\\
&=(\ep_P\ot 1)(1\ot 1\ot \ep_P)(\sum p_{1(0)}\ot p_{1(1)}\ot p_2)=(\ep_P\ot 1)(\sum p_{(0)}\ot p_{(1)})\\
&=\sum\ep_P(p_{(0)})p_{(1)}.
\end{split}
\end{equation*}
\end{proof}
Let $C\times P$ denote the direct product of the two coalgebras $C$ and $P$. The comultiplication in $C\times P$
is given by $\D(c,p)=\sum (c_1,0)\ot (c_2,0)+\sum (0,p_1)\ot (0,p_2)$ for all $(c,p)\in C\times P$.

\begin{proposition}\prlabel{prop2.5}
Let $(C,P)$ be a Dorroh pair of coalgebras. If $P$ has a counit $\ep_P$,
then $C\ltimes_d P\cong C\times P$ as coalgebras.
\end{proposition}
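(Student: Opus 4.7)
The plan is to mimic \prref{prop1.1} in the dual setting: I use the counit $\ep_P$ to construct a ``correction map'' $f: P\ra C$ that absorbs the cross terms appearing in the comultiplication \equref{e6} of $C\ltimes_d P$. Set
\begin{equation*}
f(p):=\sum p_{(-1)}\ep_P(p_{(0)})\in C,
\end{equation*}
which by \prref{2.4} also equals $\sum\ep_P(p_{(0)})p_{(1)}$. The candidate isomorphism is
\begin{equation*}
\eta: C\times P\ra C\ltimes_d P,\quad \eta(c,p)=(c+f(p),p),
\end{equation*}
with evident inverse $(c,p)\mapsto(c-f(p),p)$, so bijectivity of $\eta$ is immediate.

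The core of the argument is to establish three compatibility identities, derived from the axioms \equref{e3}--\equref{e5} together with the counit property of $\ep_P$:
\begin{align*}
\text{(I)}\ & \sum f(p_1)\ot p_2=\sum p_{(-1)}\ot p_{(0)},\\
\text{(II)}\ & \sum p_1\ot f(p_2)=\sum p_{(0)}\ot p_{(1)},\\
\text{(III)}\ & \D_C\circ f=(f\ot f)\D_P.
\end{align*}
Identity (I) follows by applying $1\ot\ep_P\ot 1$ to \equref{e4} and then collapsing the middle factor using the counit of $P$ on the component $p_{(0)}$; (II) is proved analogously from \equref{e3} using the second form of $f$; and (III) results from the coassociativity of $\rho_l$ (which gives $\D_Cf(p)=\sum p_{(-1)}\ot f(p_{(0)})$) together with applying $1\ot f$ to (I).

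Granted (I)--(III), one checks that the map $g: P\ra C\ltimes_d P$ defined by $g(p):=(f(p),p)$ is a coalgebra homomorphism: expanding both $\D_{C\ltimes_d P}g(p)$ via \equref{e6} and $(g\ot g)\D_P(p)=\sum(f(p_1),p_1)\ot(f(p_2),p_2)$, and matching components in the four summands $(C,0)\ot(C,0)$, $(C,0)\ot(0,P)$, $(0,P)\ot(C,0)$ and $(0,P)\ot(0,P)$ reduces exactly to (III), (I), (II) and a tautology respectively. Since $\tau_C: C\ra C\ltimes_d P$ is also a coalgebra map and the direct product coalgebra $C\times P$ is the coproduct of $C$ and $P$ in the category of coalgebras, the pair $(\tau_C,g)$ induces the coalgebra map $\eta=\tau_C+g$, which coincides with the map defined above. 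The main obstacle is the Sweedler-index bookkeeping behind (I)--(III); once these are in hand the remainder is essentially mechanical.
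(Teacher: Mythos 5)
Your proposal is correct and takes essentially the same route as the paper's proof: your correction map $f(p)=\sum p_{(-1)}\ep_P(p_{(0)})$ and identities (I), (II), (III) are precisely the paper's Eqs.~\equref{e7}, \equref{e8} and \equref{e10}, and your $\eta$ is just the inverse of the paper's isomorphism $\zeta:(c,p)\mapsto(c,p)-(\sum p_{(-1)}\ep_P(p_{(0)}),0)$. The only cosmetic difference is that you organize the verification by matching the four components of \equref{e6} and invoke the coproduct property of $C\times P$, rather than expanding $(\zeta\ot\zeta)\D_d$ directly.
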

\begin{proof}
	Assume that $P$ has a counit $\ep_P$.
Define a map $\zeta: C\ltimes_d P\ra C\ti P$ by $\zeta(c,p)=(c,p)-(\sum p_{(-1)}\ep_P(p_{(0)}),0)$.
Let $\D_d$ and $\D_{\times}$ denote the comultiplications of $C\ltimes_d P$ and $C\times P$, respectively.
Let $p\in P$. Then by Eq.\equref{e4}, we have
\begin{equation} \eqlabel{e7}
\sum p_{1(-1)}\ep_P(p_{1(0)})\ot p_2=\sum p_{(-1)}\ep_P(p_{(0)1})\ot p_{(0)2}
=\sum p_{(-1)}\ot p_{(0)}.
\end{equation}
By \prref{2.4} and Eq.\equref{e3},
\begin{equation}\eqlabel{e8}
\begin{split}
\sum p_1\ot p_{2(-1)}\ep_P(p_{2(0)})=&\sum p_1\ot \ep_P(p_{2(0)})p_{2(1)}\\
=&\sum p_{(0)1}\ot \ep_P(p_{(0)2})p_{(1)}
=\sum p_{(0)}\ot p_{(1)}.
\end{split}
\end{equation}
Applying $\rho_l\ot\ep_P\ot 1$ on the both sides of Eq.\equref{e3}, one gets
\begin{equation*}
\begin{split}
\sum p_{1(-1)}\ot p_{1(0)}\ot \ep_P(p_{2(0)})p_{2(1)}&=\sum \rho_l(p_{(0)1})\ep_P(p_{(0)2})\ot p_{(1)}\\
&=\sum p_{(-1)}\ot p_{(0)}\ot p_{(1)}.
\end{split}
\end{equation*}
Then applying $(1\ot \ep_P\ot 1)$ to the above equation, it follows from \prref{2.4} that
\begin{equation} \eqlabel{e10}
\sum (p_{(-1)}\ep_P(p_{(0)})\ot p_{(1)}=\sum p_{1(-1)}\ep_P(p_{1(0)})\ot p_{2(-1)}\ep_P(p_{2(0)}).
\end{equation}
Now for any $c\in C$, we have
\begin{equation*}
(\zeta\ot \zeta)\D_d(c,0)=(\zeta\ot \zeta)(\sum (c_1,0)\ot (c_2,0))=\sum (c_1,0)\ot (c_2,0)=\D_{\times}\zeta(c,0).
\end{equation*}
For any $p\in P$, by Eqs.\equref{e7}-\equref{e10}, we have
\begin{equation*}
\begin{split}
(\zeta\ot\zeta)\D_d(0,p)
&=(\zeta\ot \zeta)(\sum (p_{(-1)},0)\ot (0,p_{(0)})+\sum (0,p_{(0)})\ot (p_{(1)},0)+\sum (0,p_1)\ot (0,p_2))\\
&=\sum (p_{(-1)},0)\ot (0,p_{(0)})-\sum (p_{(-2)},0)\ot (p_{(-1)}\ep_P(p_{(0)}),0)\\
&\quad +\sum (0,p_{(0)})\ot (p_{(1)},0)-\sum (p_{(-1)}\ep_P(p_{(0)}),0)\ot (p_{(1)},0)\\
&\quad +\sum (0,p_1)\ot (0,p_2)-\sum (p_{1(-1)}\ep_P(p_{1(0)}),0)\ot (0,p_2)\\
&\quad-\sum (0,p_1)\ot (p_{2(-1)}\ep_P(p_{2(0)}),0)+\sum (p_{1(-1)}\ep_P(p_{1(0)}),0)\ot (p_{2(-1)}\ep_P(p_{2(0)}),0)\\
&=\sum (0,p_1)\ot (0,p_2)-\sum(p_{(-2)},0)\ot (p_{(-1)}\ep_P(p_{(0)}),0)\\
&=\D_{\times}((0,p)-(\sum p_{(-1)}\ep_P(p_{(0)}),0))=\D_{\times}\zeta(0,p).
\end{split}
\end{equation*}
Therefore, $\zeta$ is a coalgebra homomorphism.
It is easy to check that $\zeta$ is a bijection. Hence $\zeta$ is a coalgebra isomorphism.
\end{proof}

Let $\mathcal{C}$ denote the category of all coalgebras and coalgebra morphisms.

For two coalgebra Dorroh pairs $(C,P)$ and $(C',P')$, we call a pair $(\phi, f): (C,P)\ra(C',P')$
{\it a homomorphism of Dorroh pair of coalgebras} 
if $\phi: C\ra C'$ and $f:P\ra P'$ are coalgebra homomorphisms satisfying
 $\sum f(p)_{(-1)}\ot f(p)_{(0)}=\sum\phi(p_{(-1)})\ot f(p_{(0)})$ and $\sum f(p)_{(0)}\ot f(p)_{(1)}=\sum f(p_{(0)})\ot \phi(p_{(1)})$
for all $p\in P$. Let $\mathcal{DPC}$ be the category of all Dorroh pairs of coalgebras
and homomorphisms.


For any coalgebra $D$, $(D,D)$ is a Dorroh pair of coalgebras with the regular comodule structures. 

\begin{remark}\relabel{rem2.9}
Let $(C,P)$ be a Dorroh pair of coalgebras. Then the projections $\pi_C: C\ltimes_d P \ra C $ 
and $\pi_P: C\ltimes_d P\ra P$ are coalgebra homomorphisms. Moreover,
$\sum\pi_P(c,p)_{(-1)}\ot \pi_P(c,p)_{(0)}=\sum p_{(-1)}\ot p_{(0)}=(\pi_C\ot \pi_P)(\D(c,p))$
and $\sum\pi_P(c,p)_{(0)}\ot \pi_P(c,p)_{(1)}=\sum p_{(0)}\ot p_{(1)}=(\pi_P\ot \pi_C)(\D(c,p))$
for all $(c, p)\in C\ltimes_d P$. That is, $(\pi_C, \pi_P)$ is
a Dorroh pair homomorphism from $(C\ltimes_d P, C\ltimes_d P)$ to $(C,P)$.
\end{remark}

The following result describes that Dorroh extension $C\ltimes_d P$ and the projections
$\pi_C$, $\pi_P$ have a universal property in the category $\mathcal{C}$.

\begin{proposition}
Let $(C,P)$ be a Dorroh pair of coalgebras and $D$ a coalgebra. Suppose that $(\vf, f): (D, D)\ra (C, P)$ 
is a homomorphism of Dorroh pair of coalgebras. Then there is a unique coalgebra homomorphism 
$\eta: D\ra C\ltimes_d P$ such that $\pi_C  \eta=\vf$ and $\pi_P \eta=f$.
\end{proposition}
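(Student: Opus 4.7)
The plan is to define $\eta$ by the only formula compatible with the two projection conditions, namely
\[
  \eta(d)=(\vf(d),f(d)),\quad d\in D.
\]
Linearity is automatic, and the identities $\pi_C\eta=\vf$, $\pi_P\eta=f$ are immediate. Uniqueness follows at once from the direct sum decomposition $C\ltimes_d P=C\oplus P$: any linear $\eta'\colon D\to C\ltimes_d P$ satisfying the two projection conditions must have its $C$- and $P$-components equal to $\vf(d)$ and $f(d)$, respectively, hence $\eta'=\eta$.

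The substantive step is verifying that $\eta$ is a coalgebra homomorphism, i.e.\ $\D_{C\ltimes_d P}\eta=(\eta\ot\eta)\D_D$. On one hand, from the formula \equref{e6},
\[
  \D_{C\ltimes_d P}\eta(d)=\sum(\vf(d)_1,0)\ot(\vf(d)_2,0)+\sum(f(d)_{(-1)},0)\ot(0,f(d)_{(0)})
\]
\[
  +\sum(0,f(d)_{(0)})\ot(f(d)_{(1)},0)+\sum(0,f(d)_1)\ot(0,f(d)_2),
\]
where $f(d)_1\ot f(d)_2$ refers to the comultiplication of $P$. On the other hand, expanding $(\eta\ot\eta)\D_D(d)=\sum(\vf(d_1),f(d_1))\ot(\vf(d_2),f(d_2))$ bilinearly yields four terms lying in $C\ot C$, $C\ot P$, $P\ot C$, $P\ot P$ respectively.

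To match the two expressions I would proceed termwise. The pure $C\ot C$ piece matches because $\vf$ is a coalgebra morphism; the pure $P\ot P$ piece matches because $f$ is a coalgebra morphism. For the mixed pieces I apply the Dorroh pair homomorphism identities for $(\vf,f)\colon(D,D)\to(C,P)$. Since $D$ is regarded as a $D$-bicomodule via $\D_D$ itself, $d_{(-1)}\ot d_{(0)}=\D_D(d)=d_{(0)}\ot d_{(1)}=\sum d_1\ot d_2$, so those identities read
\[
\sum f(d)_{(-1)}\ot f(d)_{(0)}=\sum\vf(d_1)\ot f(d_2),\qquad
\sum f(d)_{(0)}\ot f(d)_{(1)}=\sum f(d_1)\ot\vf(d_2),
\]
which are exactly the two mixed terms needed.

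There is no serious obstacle here; the only thing to be careful about is interpreting the bicomodule structure on the source $D$ as the regular one, so that the abstract Dorroh pair compatibility becomes the concrete mixing identities above. Once this identification is made, the verification reduces to reading off four term-by-term equalities.
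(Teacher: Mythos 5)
Your proposal is correct and follows essentially the same route as the paper: define $\eta(d)=(\vf(d),f(d))$, get uniqueness from the direct sum decomposition, and verify the comultiplication identity term by term, using that $\vf$ and $f$ are coalgebra maps for the $C\ot C$ and $P\ot P$ pieces and the Dorroh pair homomorphism identities (with $D$ carrying its regular bicomodule structure) for the mixed pieces. Nothing is missing.
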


\begin{proof}
Define $\eta: D\ra C\ltimes_dP$ by $\eta(d)=(\vf(d),f(d))$ for $d\in D$. Then we have
\begin{equation*}
\begin{split}
\D \eta(d)=&\sum (\vf(d)_1,0)\ot (\vf(d)_2,0)+\sum (f(d)_{(-1)},0)\ot (0,f(d)_{(0)})\\
&+\sum (0,f(d)_{(0)})\ot (f(d)_{(1)},0)+\sum(0,f(d)_1)\ot (0,f(d)_2)\\
=&\sum (\vf(d_1),0)\ot (\vf(d_2),0)+\sum (\vf(d_1),0)\ot (0,f(d_2))\\
&+\sum (0,f(d_1))\ot (\vf(d_2),0)+\sum(0,f(d_1))\ot (0,f(d_2))\\
=&\sum(\vf(d_1),f(d_1))\ot (\vf(d_2), f(d_2))=(\eta \ot \eta)\D(d).\\
\end{split}
\end{equation*}
Hence $\eta $ is a coalgebra homomorphism. Obviously, $\pi_C \eta=\vf$ and $\pi_P \eta=f$.
Now assume that $\eta': D\ra C\ltimes_dP$ is another coalgebra homomorphism such that
$\pi_C \eta'(d)=\vf(d)$ and $\pi_P\eta'(d)=f(d)$. Then $\eta'(d)=(\pi_C( \eta'(d)),\pi_P(\eta'(d)))=(\vf(d),f(d))=\eta(d)$.
This shows the uniqueness of $\eta$.
\end{proof}

Now we give some examples of Dorroh pairs of coalgebras and Dorroh extensions of coalgebras.

\begin{example}\exlabel{ex2.10}
(a) Let $P$ be a coalgebra. Then $P$ is a trivial $k$-bicomodule
and $(k,P)$ is a Dorroh pair of coalgebras. Hence
$k\ltimes_d P$ is a counital coalgebra with the comultiplication given by
$$\D(\a, p)=(\a,0)\ot (1,0)+(1,0)\ot (0,p)+(0,p)\ot (1,0)+\sum(0,p_1)\ot (0,p_2),$$
and the counit $\varepsilon$ given by  $\varepsilon(\a, p)=\a$, $(\a, p)\in k\ltimes_dP$.

(b) Let $C$ and $P$ be coalgebras. Regarding $P$ as a $C$-bicomodule with the zero coactions, i.e.,
$\rho_l=\rho_r=0$. Then $(C,P)$ is a Dorroh pair of coalgebras 
and $C\ltimes_d P$ is exactly the direct product coalgebra $C\ti P$ of $C$ and $P$. The comultiplication
is given by $\D(c,p)=\sum (c_1,0)\ot (c_2,0)+\sum (0,p_1)\ot (0,p_2)$ for all $(c,p)\in C\times P$.

(c) Let $C$ be a coalgebra and $M$ a  $C$-bicomodule. One can construct the trivial extension $C\ltimes M$
as follows:  $C\ltimes M=C\op M$ as vector spaces, the comultiplication is defined by 
$\D(c,m)=\sum (c_1,0)\ot (c_2,0)+\sum (m_{(-1)},0)\ot (0,m_{(0)})+\sum (0,m_{(0)})\ot (m_{(1)},0)$ for all
$c\in C$ and $m\in M$. If we define a comultiplication of $M$ by $\D_M=0$,
then $(C,M)$ is a Dorroh pair of coalgebras and $C\ltimes_d M$ is exactly the trivial extension $C\ltimes M$.

(d) Assume that $C$ and $D$ are coalgebras. Let $M$ be a $C$-$D$-bicomodule. 
Then one can construct a triangular matrix coalgebra 
$\left(\begin{array}{cc}
 C & M \\
 0 & D \\
\end{array}\right)$.
 Note that $M$ is also a  $C\times D$-bicomodule with
 $\rho_l(m):=\sum(m_{(-1)},0)\ot m_{(0)}$ and
 $\rho_r(m):=\sum m_{(0)}\ot(0,m_{(1)})$.
 Then the trivial extension $(C\times D)\ltimes M$ is isomorphic to the triangular matrix coalgebra
$\left(\begin{array}{cc}
C & M \\
0 & D \\
\end{array}\right)$. 
In particular, one-point extension $\left(
                   \begin{array}{cc}
                     C & M \\
                     0 & k \\
                   \end{array}
                 \right)$ is a Dorroh extension of coalgebra.

(e) Consider the multiplicative monoid $\mathbb{Z}_2=\{0,1\}$, where $0\cdot 0=0\cdot 1=1\cdot 0=0,1\cdot 1=1$.
Let $D_1:=C$ and $D_0:=P$ be embedded into Dorroh extension $D= C\ltimes_d P$ as before. 
Then we have $\D(D_1)\ss D_1\ot D_1$ and $\D(D_0)\ss D_0\ot D_0+D_0\ot D_1+D_1\ot D_0$. 
Hence $C\ltimes_d P$ is a $\mathbb{Z}_2$-graded coalgebra.
Conversely, any $\mathbb{Z}_2$-graded coalgebra $C=C_1\op C_0$ is a Dorroh extension of $C_1$ by $C_0$.

(f) Let $D=\oplus_{i=0}^{\infty} D_i$ be an $\mathbb{N}$-graded coalgebra.
Set $P:=\oplus_{i=1}^{\infty} D_i$. Then $D$ is a Dorroh extension of $D_0$ by $P$.
\end{example}

In the following, we consider modules over $C\ltimes_d P$ and iterated Dorroh extension of coalgebras.

Let $(C,P)$ be a Dorroh pair and $M$ a vector space.
Let $\rho_l$ and $\rho_r$ denote the left and right $C$-comodule structure maps of $P$, respectively.
If $(M, \rho_l^{C\ltimes_d P})$ is a left $C\ltimes_d P$-comodule, then $M$ is both a left $C$-comodule
via $\rho_l^C:=(\pi_C\ot 1)\rho_l^{C\ltimes_d P}$ and a left $P$-comodule via $\rho_l^P:=(\pi_P\ot 1)\rho_l^{C\ltimes_d P}$
since $\pi_C: C\ltimes_d P\ra C$ and $\pi_P: C\ltimes_d P\ra P$ are both coalgebra homomorphisms.
By \reref{rem2.9}, $(\pi_P\ot \pi_C)\D=\rho_r\pi_P$ and $(\pi_C\ot \pi_P)\D=\rho_l\pi_P$. Hence we have
\begin{equation*}
\begin{split}
(1\ot \rho_l^C)\rho_l^P&=(1\ot (\pi_C\ot 1)\rho_l^{C\ltimes_d P})(\pi_P\ot 1)\rho_l^{C\ltimes_d P}
=(\pi_P\ot \pi_C\ot 1)(1\ot \rho_l^{C\ltimes_d P})\rho_l^{C\ltimes_d P}\\
&=(\pi_P\ot \pi_C\ot 1)(\D\ot 1)\rho_l^{C\ltimes_d P}
=(\rho_r\pi_P\ot 1)\rho_l^{C\ltimes_d P}=(\rho_r\ot 1)(\pi_P\ot 1)\rho_l^{C\ltimes_d P}\\
&=(\rho_r\ot 1)\rho_l^P,
\end{split}
\end{equation*}
and similarly $(1\ot \rho_l^P)\rho_l^C=(\rho_l\ot 1)\rho_l^P$.

Conversely, suppose that $M$ is both a left  $C$-comodule via $\rho_l^C$ and a left $P$-comodule via
$\rho_l^P$ satisfying $(1\ot \rho_l^C)\rho_l^P=(\rho_r\ot 1)\rho_l^P$ and
$(1\ot \rho_l^P)\rho_l^C=(\rho_l\ot 1)\rho_l^P$. For $m\in M$, we write
$\rho^C_l(m)=\sum m_{(-1)}\ot m_{(0)}$ and $\rho^P_l(m)=\sum m_{[-1]}\ot m_{[0]}$.
Then $(1\ot \rho_l^C)\rho_l^P=(\rho_r\ot 1)\rho_l^P$ and $(1\ot \rho_l^P)\rho_l^C=(\rho_l\ot 1)\rho_l^P$
are equivalent to $\sum m_{[-1]}\ot m_{[0](-1)}\ot m_{[0](0)}=\sum m_{[-1](0)}\ot m_{[-1](1)}\ot m_{[0]}$ and
$\sum m_{(-1)}\ot m_{(0)[-1]}\ot m_{(0)[0]}=\sum m_{[-1](-1)}\ot m_{[-1](0)}\ot m_{[0]}$, respectively, where $m\in M$.

Now define $\rho_l^{C\ltimes_d P}: M\ra (C\ltimes_d P)\ot M$ by
$\rho_l^{C\ltimes_d P}(m)=\sum (m_{(-1)},0)\ot m_{(0)}+\sum (0, m_{[-1]})\ot m_{[0]}$ for $m\in M$.
Then for any $m\in M$, we have
\begin{equation*}
\begin{split}
(1\ot \rho_l^{C\ltimes_d P})\rho_l^{C\ltimes_d P}(m)
=&(1\ot \rho_l^{C\ltimes_d P})(\sum (m_{(-1)},0)\ot m_{(0)}+\sum (0, m_{[-1]})\ot m_{[0]})\\
=&\sum(m_{(-2)},0)\ot (m_{(-1)},0)\ot m_{(0)}
+\sum(m_{(-1)},0)\ot (0, m_{(0)[-1]})\ot m_{(0)[0]}\\
&+\sum(0, m_{[-1]})\ot (m_{[0](-1)},0)\ot m_{[0](0)}
+\sum(0,m_{[-2]})\ot (0, m_{[-1]})\ot m_{[0]}\\
=&\sum(m_{(-2)},0)\ot (m_{(-1)},0)\ot m_{(0)}
+\sum(m_{[-1](-1)},0)\ot (0, m_{[-1](0)})\ot m_{[0]}\\
&+\sum(0, m_{[-1](0)})\ot (m_{[-1](1)},0)\ot m_{[0]}
+\sum(0,m_{[-2]})\ot (0 ,m_{[-1]})\ot m_{[0]}\\
=&(\D\ot 1)(\sum (m_{(-1)},0)\ot m_{(0)}+\sum (0, m_{[-1]})\ot m_{[0]})\\
=&(\D\ot 1)\rho_l^{C\ltimes_d P}(m).
\end{split}
\end{equation*}
Hence $(M, \rho_l^{C\ltimes_d P})$ is a left $C\ltimes_d P$-comodule.

For right comodules, there is a similar statement. We conclude these in the following.

\begin{proposition}\prlabel{prop2.12}
Let $(C,P)$ be a Dorroh pair of coalgebras and $M$ a vector space. Then $M$ is a left (resp., right) $C\ltimes_d P$-comodule
if and only if $M$ is both a left (resp., right) $C$-comodule via $\rho_l^C$ (resp., $\rho_r^C$)
and a left (resp., right) $P$-comodule via $\rho_l^P$ (resp., $\rho_r^P$) such that
$(1\ot \rho_l^C)\rho_l^P=(\rho_r\ot 1)\rho_l^P$ and $(1\ot \rho_l^P)\rho_l^C=(\rho_l\ot 1)\rho_l^P$
(resp., $(\rho_r^C\ot 1)\rho_r^P=(1\ot\rho_l)\rho_r^P$ and $(\rho_r^P\ot 1)\rho_r^C=(1\ot\rho_r)\rho_r^P$).

If this is the case, then $M$ is a $C\ltimes_d P$-bicomodule
if and only if $M$ is meanwhile a  $C$-bicomodule, a $P$-bicomodule, a $C$-$P$-bicomodule and a $P$-$C$-bicomodule.
\end{proposition}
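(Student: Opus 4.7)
My plan is to treat the left comodule equivalence, the right comodule equivalence, and the bicomodule claim in turn, essentially packaging the calculation that already appears in the discussion preceding the proposition.

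For the left comodule part, the forward direction goes as follows. If $(M,\rho_l^{C\ltimes_d P})$ is a left $C\ltimes_d P$-comodule, then since $\pi_C$ and $\pi_P$ are coalgebra homomorphisms, the compositions $\rho_l^C:=(\pi_C\ot 1)\rho_l^{C\ltimes_d P}$ and $\rho_l^P:=(\pi_P\ot 1)\rho_l^{C\ltimes_d P}$ define left $C$- and $P$-comodule structures on $M$. To extract the two compatibility identities, I would apply $(\pi_P\ot \pi_C\ot 1)$ and $(\pi_C\ot \pi_P\ot 1)$ to the coassociativity equation $(\D\ot 1)\rho_l^{C\ltimes_d P}=(1\ot \rho_l^{C\ltimes_d P})\rho_l^{C\ltimes_d P}$ and then use the identities $(\pi_P\ot \pi_C)\D=\rho_r\pi_P$ and $(\pi_C\ot \pi_P)\D=\rho_l\pi_P$ noted in \reref{rem2.9}. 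The converse is precisely the calculation carried out just before the statement: define $\rho_l^{C\ltimes_d P}(m)=\sum (m_{(-1)},0)\ot m_{(0)}+\sum (0,m_{[-1]})\ot m_{[0]}$ and verify coassociativity term by term, using the two compatibility conditions to convert the cross terms $(m_{(-1)},0)\ot (0,m_{(0)[-1]})\ot m_{(0)[0]}$ and $(0,m_{[-1]})\ot(m_{[0](-1)},0)\ot m_{[0](0)}$ into the form required by the definition \equref{e6} of $\D$ on $C\ltimes_d P$.

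The right comodule case is entirely dual. I would set $\rho_r^{C\ltimes_d P}(m)=\sum m_{(0)}\ot(m_{(1)},0)+\sum m_{[0]}\ot(0,m_{[1]})$ and repeat the same bookkeeping with the roles of left and right interchanged, invoking the other two identities from \reref{rem2.9}.

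For the bicomodule claim, the task is to show that the single condition $(1\ot \rho_r^{C\ltimes_d P})\rho_l^{C\ltimes_d P}=(\rho_l^{C\ltimes_d P}\ot 1)\rho_r^{C\ltimes_d P}$ decomposes into exactly four compatibility relations. My plan is to expand both sides using the explicit formulas for $\rho_l^{C\ltimes_d P}$ and $\rho_r^{C\ltimes_d P}$ above, and then collect terms according to which of $C$ or $P$ sits in each of the two outer tensor slots. The four resulting components sit in $C\ot M\ot C$, $C\ot M\ot P$, $P\ot M\ot C$, $P\ot M\ot P$, and equating them gives respectively the $C$-bicomodule condition, the $C$-$P$-bicomodule condition, the $P$-$C$-bicomodule condition, and the $P$-bicomodule condition. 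The main obstacle I expect is purely notational: keeping the four families of Sweedler indices straight so that the four components can be read off unambiguously; once the decomposition is written out, each of the four equalities is immediate from the corresponding hypothesis.
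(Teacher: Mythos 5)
Your proposal is correct and follows essentially the same route as the paper: the left/right comodule equivalence is exactly the computation the paper carries out in the discussion preceding the proposition (forward direction via the coalgebra projections $\pi_C,\pi_P$ and \reref{rem2.9}, converse by defining $\rho_l^{C\ltimes_d P}$ explicitly and checking coassociativity), and the bicomodule claim is verified by the same term-by-term expansion of $(1\ot \rho_r^{C\ltimes_d P})\rho_l^{C\ltimes_d P}=(\rho_l^{C\ltimes_d P}\ot 1)\rho_r^{C\ltimes_d P}$, with your identification of the four components in $C\ot M\ot C$, $C\ot M\ot P$, $P\ot M\ot C$, $P\ot M\ot P$ matching the four bicomodule conditions exactly as in the paper's calculation.
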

\begin{proof}
We only need to show the last statement of the lemma. The ``only if" part is obvious
since the projections $\pi_C: C\ltimes_d P\ra C$ and $\pi_P: C\ltimes_d P\ra P$
are both coalgebra homomorphisms. Now we show the ``if" part.

Assume that $M$ is a  $C$-bicomodule, a $P$-bicomodule, a $C$-$P$-bicomodule and a $P$-$C$-bicomodule
 with the comodule structures  $\rho_l^C$, $\rho_l^P$, $\rho_r^C$ and $\rho_r^P$.
We use the notations before and write $\rho_r^C(m)=\sum m_{(0)}\ot m_{(1)}$ and
$\rho_r^P(m)=\sum m_{[0]}\ot m_{[1]}$ for $m\in M$. Then the right comodule structure map
$\rho_r^{C\ltimes_d P}: M\ra M\ot C\ltimes_d P$ is given by
$\rho_r^{C\ltimes_d P}(m)=\sum m_{(0)}\ot(m_{(1)}, 0)+\sum m_{[0]}\ot (0, m_{[1]})$ for $m\in M$.
Hence for any $m\in M$, we have
\begin{equation*}
\begin{split}
(1\ot \rho_r^{C\ltimes_d P})\rho_l^{C\ltimes_d P}(m)
=&(1\ot \rho_r^{C\ltimes_d P})\Big(\sum (m_{(-1)},0)\ot m_{(0)}+\sum (0, m_{[-1]})\ot m_{[0]}\Big)\\
=&\sum (m_{(-1)},0)\ot m_{(0)}\ot (m_{(1)},0)+\sum (m_{(-1)},0)\ot m_{(0)[0]}\ot (0, m_{(0)[1]})\\
&+\sum (0,m_{[-1]})\ot m_{[0](0)}\ot (m_{[0](1)},0)+\sum (0, m_{[-1]})\ot m_{[0]}\ot (0,m_{[1]})\\
=&\sum (m_{(-1)},0)\ot m_{(0)}\ot (m_{(1)},0)+\sum (m_{[0](-1)},0)\ot m_{[0](0)}\ot (0, m_{[1]})\\
&+\sum (0,m_{(0)[-1]})\ot m_{(0)[0]}\ot (m_{(1)},0)+\sum (0, m_{[-1]})\ot m_{[0]}\ot (0,m_{[1]})\\
=&(\rho_l^{C\ltimes_d P}\ot 1)\Big(\sum m_{(0)}\ot(m_{(1)}, 0)+\sum m_{[0]}\ot (0, m_{[1]})\Big)\\
=&(\rho_l^{C\ltimes_d P}\ot 1)\rho_r^{C\ltimes_d P}(m).
\end{split}
\end{equation*}
This shows that $(M,\rho_l^{C\ltimes_d P},\rho_r^{C\ltimes_d P})$ is a $C\ltimes_d P$-bicomodule.
\end{proof}

\begin{lemma}\lelabel{lem2.13}
Let $(C, P)$ be a Dorroh pair of coalgebras and $D$ be a coalgebra. If there is a coalgebra homomorphism $f: C\ra D$,
then $(D, P)$ is also a Dorroh pair of coalgebras, where the left and right $D$-comodule structure maps of $P$ are given
by $\rho^D_l=(f\ot 1)\rho_l$ and $\rho^D_r=(1\ot f)\rho_r$, respectively.
\end{lemma}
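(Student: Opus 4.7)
The plan is to verify the three conditions in the definition of a Dorroh pair of coalgebras for $(D, P)$: that $(P, \rho_l^D)$ is a left $D$-comodule, that $(P, \rho_r^D)$ is a right $D$-comodule, that these two coactions make $P$ into a $D$-bicomodule, and that the three compatibility equations \equref{e3}--\equref{e5} hold with $C$ replaced by $D$ and $\rho_l, \rho_r$ replaced by $\rho_l^D, \rho_r^D$. The comultiplication $\D_P$ on $P$ is to be kept unchanged. The overall strategy is to reduce each identity to the corresponding one for the original $C$-coactions by applying $f$ to whichever tensor slot carries the $C$-component, using that $f$ is a coalgebra homomorphism.

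For the left comodule axiom, since $\D_D f=(f\ot f)\D_C$, one computes $(\D_D\ot 1)\rho_l^D=((f\ot f)\D_C\ot 1)\rho_l$ and $(1\ot \rho_l^D)\rho_l^D=(f\ot f\ot 1)(1\ot \rho_l)\rho_l$, and these agree by left coassociativity $(\D_C\ot 1)\rho_l=(1\ot \rho_l)\rho_l$ of the original $C$-coaction. The right comodule axiom for $\rho_r^D$ is symmetric. For the bicomodule compatibility, both $(\rho_l^D\ot 1)\rho_r^D$ and $(1\ot \rho_r^D)\rho_l^D$ reduce to $(f\ot 1\ot f)$ applied to the two sides of $(\rho_l\ot 1)\rho_r=(1\ot \rho_r)\rho_l$, which already agree because $(C, P)$ is a Dorroh pair.

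For the three compatibility equations, the key observation is that in each of \equref{e3}, \equref{e4}, \equref{e5} the $C$-component appears in exactly one of the three tensor slots. Applying $1\ot 1\ot f$ to \equref{e3} yields the required identity for $\rho_r^D$; applying $f\ot 1\ot 1$ to \equref{e4} yields the identity for $\rho_l^D$; and applying $1\ot f\ot 1$ to \equref{e5} yields the mixed identity involving both $\rho_r^D$ in the first term and $\rho_l^D$ in the second. Since $\D_P$ and the $P$-factors are untouched, nothing more needs to be checked.

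I do not foresee any real obstacle: the proof is a routine functoriality-style unwinding, and the only point needing a bit of care is bookkeeping which tensor slot receives the $f$ in each equation, which is dictated uniquely by where the $C$-factor sits in the original identity.
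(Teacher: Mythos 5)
Your proposal is correct and follows essentially the same route as the paper: the $D$-bicomodule structure is the standard push-forward of a comodule along the coalgebra map $f$, and each of the compatibility equations \equref{e3}--\equref{e5} is obtained by applying $f$ in the unique tensor slot carrying the $C$-component (the paper writes this as, e.g., $(\D_P\ot 1)(1\ot f)\rho_r=(1\ot 1\ot f)(\D_P\ot 1)\rho_r=(1\ot 1\ot f)(1\ot\rho_r)\D_P=(1\ot\rho^D_r)\D_P$). Your slot bookkeeping is exactly right, and you are if anything more explicit than the paper about verifying the comodule axioms themselves.
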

\begin{proof}
Since $P$ is a $C$-bicomodule and $f$ is a coalgebra homomorphism, $P$ is a $D$-bicomodule via
$\rho^D_l$ and $\rho^D_r$. Since $(C, P)$ is a Dorroh pair, we have
$$\begin{array}{rl}
(\D_P\ot 1)\rho^D_r=&(\D_P\ot 1)(1\ot f)\rho_r=(1\ot1\ot f)(\D_P\ot 1)\rho_r\\
=&(1\ot1\ot f)(1\ot\rho_r)\D_P=(1\ot\rho^D_r)\D_P.\\
\end{array}$$
Similarly, one can check that $(1\ot\D_P)\rho^D_l=(\rho^D_l\ot 1)\D_P$ and
$(\rho^D_r\ot 1)\D_P=(1\ot\rho^D_l)\D_P$. Hence $(D, P)$ is a Dorroh pair of coalgebras.	
\end{proof}

Let $(C_1,C_2),(C_1,C_3),(C_2,C_3)\in \mathcal{DPC}$. For $1\<i<j\<3$,
the left and right $C_i$-comodule structure maps of $C_j$ are denoted by $\rho_{l, C_j}^{C_i}$
and $\rho_{r, C_j}^{C_i}$, respectively. And we write $\rho_{l, C_j}^{C_i}(c)=\sum c_{(-1)}^{(i)}\ot c_{(0)}^{(i)}$
and $\rho_{r, C_j}^{C_i}(c)=\sum c_{(0)}^{(i)}\ot c_{(1)}^{(i)}$ for $c\in C_j$.

Define linear maps
$\rho_{l,C_3}^{C_1\ltimes_d C_2}: C_3\ra (C_1\ltimes_d C_2)\ot C_3$
and $\rho_{r,C_3}^{C_1\ltimes_d C_2}: C_3\ra C_3\ot(C_1\ltimes_d C_2)$ by
\begin{equation*}
\begin{split}
\rho_{l,C_3}^{C_1\ltimes_d C_2}(d)&=\sum (d^{(1)}_{(-1)},0)\ot d^{(1)}_{(0)}+\sum (0,d^{(2)}_{(-1)})\ot d^{(2)}_{(0)},\\
\rho_{r,C_3}^{C_1\ltimes_d C_2}(d)&=\sum d^{(1)}_{(0)}\ot(d^{(1)}_{(1)},0)+\sum d^{(2)}_{(0)}\ot (0, d^{(2)}_{(1)}),
\end{split}
\end{equation*}
and
$\rho_{l,C_2\ltimes_d C_3}^{C_1}: C_2\ltimes_d C_3\ra C_1\ot (C_2\ltimes_d C_3)$ and
$\rho_{r,C_2\ltimes_d C_3}^{C_1}: C_2\ltimes_d C_3\ra (C_2\ltimes_d C_3)\ot C_1$ by
\begin{equation*}
\begin{split}
\rho_{l,C_2\ltimes_d C_3}^{C_1}(c,d)&=\sum c^{(1)}_{(-1)}\ot (c^{(1)}_{(0)},0)+\sum d^{(1)}_{(-1)}\ot (0,d^{(1)}_{(0)}),\\
\rho_{r,C_2\ltimes_d C_3}^{C_1}(c,d)&=\sum(c^{(1)}_{(0)},0)\ot c^{(1)}_{(1)}+\sum(0,d^{(1)}_{(0)})\ot d^{(1)}_{(1)}
\end{split}
\end{equation*}
for $c\in C_2$ and $d\in C_3$.
Then $\rho_{l,C_3}^{C_1\ltimes_d C_2}=(\iota_{1}\ot 1)\rho_{l,C_3}^{C_1}+(\iota_{2}\ot 1)\rho_{l,C_3}^{C_2}$ and
$\rho_{r,C_3}^{C_1\ltimes_d C_2}=(1\ot\iota_{1})\rho_{r,C_3}^{C_1}+(1\ot\iota_{2})\rho_{r,C_3}^{C_2}$,
where $\iota_{i}:C_i\ra C_1\ltimes_d C_2$ are the canonical embeddings, $i=1, 2$.

\begin{proposition}\prlabel{prop2.14}
Let $C_1,C_2,C_3$ be coalgebras and assume $(C_1,C_2)\in \mathcal{DPC} $. Then $(C_1\ltimes_d C_2, C_3)\in \mathcal{DPC}$
if and only if $(C_1,C_3),(C_2,C_3) \in \mathcal{DPC} $,
$C_3$ is both a $C_1$-$C_2$-bicomodule and a $C_2$-$C_1$-bicomodule such that the following equations are satisfied:
  \begin{eqnarray}
   \eqlabel{e11}(1\ot \rho_{l,C_3}^{C_1})\rho_{l,C_3}^{C_2}&=&(\rho_{r,C_2}^{C_1}\ot 1)\rho_{l,C_3}^{C_2}, \\
  \eqlabel{e12} (1\ot \rho_{l,C_3}^{C_2})\rho_{l,C_3}^{C_1}&=&(\rho_{l,C_2}^{C_1}\ot 1)\rho_{l,C_3}^{C_2}, \\
  \eqlabel{e13 } (\rho_{r,C_3}^{C_1}\ot 1)\rho_{r,C_3}^{C_2}&=&(1\ot\rho_{l,C_2}^{C_1})\rho_{r,C_3}^{C_2},\\
  \eqlabel{e14} (\rho_{r,C_3}^{C_2}\ot 1)\rho_{r,C_3}^{C_1}&=&(1\ot\rho_{r,C_2}^{C_1})\rho_{r,C_3}^{C_2}.
\end{eqnarray}

If this is the case, then $(C_1, C_2 \ltimes_d C_3)\in \mathcal{DPC}$ and 
$(C_1\ltimes_d C_2)\ltimes_d C_3\cong C_1\ltimes_d(C_2 \ltimes_d C_3)$ as coalgebras.
\end{proposition}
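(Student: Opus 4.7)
The overall strategy mirrors the algebra case \prref{prop1.7}.

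For the equivalence, the plan is to expand the three Dorroh-pair axioms \equref{e3}--\equref{e5} for the pair $(C_1\ltimes_d C_2,C_3)$ using the decompositions
$\rho^{C_1\ltimes_d C_2}_{l,C_3}=(\iota_{1}\ot 1)\rho^{C_1}_{l,C_3}+(\iota_{2}\ot 1)\rho^{C_2}_{l,C_3}$ and
$\rho^{C_1\ltimes_d C_2}_{r,C_3}=(1\ot\iota_{1})\rho^{C_1}_{r,C_3}+(1\ot\iota_{2})\rho^{C_2}_{r,C_3}$
recorded just before the proposition, together with the four-term comultiplication formula \equref{e6} for $C_1\ltimes_d C_2$ applied on its $C_2$-part. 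Each expanded equation is a tensor identity whose factors lie in $C_1\ltimes_d C_2=\tau_{C_1}(C_1)\op\tau_{C_2}(C_2)$ and $C_3$. Projecting onto the distinct block components separates the identity into independent pieces: the $C_i$-diagonal component of each axiom recovers the corresponding Dorroh-pair axiom for $(C_i,C_3)$, while the mixed components yield exactly \equref{e11}--\equref{e14} together with the bicomodule compatibilities forcing $C_3$ to be both a $C_1$-$C_2$- and a $C_2$-$C_1$-bicomodule. The converse direction reassembles these pieces to recover \equref{e3}--\equref{e5}.

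For the second statement, I would first equip $C_2\ltimes_d C_3$ with the $C_1$-bicomodule structure $\rho^{C_1}_{l,C_2\ltimes_d C_3}$ and $\rho^{C_1}_{r,C_2\ltimes_d C_3}$ defined just before the proposition, and verify the three Dorroh-pair axioms \equref{e3}--\equref{e5} for $(C_1,C_2\ltimes_d C_3)$. After expansion each axiom splits into a part landing in tensor factors coming from $C_2$ and another landing in $C_3$, and both pieces follow from the Dorroh-pair axioms for $(C_1,C_2)$ and $(C_1,C_3)$ together with \equref{e11}--\equref{e14} and the bicomodule compatibilities obtained above. This establishes $(C_1,C_2\ltimes_d C_3)\in\mathcal{DPC}$. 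The required isomorphism is the obvious linear bijection
\begin{equation*}
\phi: (C_1\ltimes_d C_2)\ltimes_d C_3\ra C_1\ltimes_d(C_2\ltimes_d C_3),\quad ((c_1,c_2),c_3)\mapsto (c_1,(c_2,c_3)),
\end{equation*}
whose inverse is evident. To check that $\phi$ is a coalgebra homomorphism I evaluate $\D\phi$ and $(\phi\ot\phi)\D$ on pure tensors; both expansions produce nested sums whose summands are labelled by the block positions $C_i\ot C_j$ for $i,j\in\{1,2,3\}$, and reading them through $\phi$ the two sides match termwise.

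The main obstacle I expect is the combinatorial bookkeeping in the first part: each Dorroh-pair axiom for $(C_1\ltimes_d C_2,C_3)$ produces a sum that, after substituting the decomposition of the $C_1\ltimes_d C_2$-coactions on $C_3$ and the four-term formula \equref{e6} for $\D_{C_1\ltimes_d C_2}$, contains many summands that must be tracked and matched against the target conditions. This is analogous to the expansion carried out in \leref{lem2.3} but one nesting level deeper; once the components are correctly isolated, the identification with \equref{e11}--\equref{e14} and the verification of the remaining parts of the proposition follow mechanically.
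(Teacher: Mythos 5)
Your proposal is correct and follows essentially the same route as the paper: decompose the coactions of $C_1\ltimes_d C_2$ on $C_3$ (and of $C_1$ on $C_2\ltimes_d C_3$) into their block components via the displayed formulas, match the resulting identities against the Dorroh-pair conditions for $(C_1,C_3)$, $(C_2,C_3)$ and \equref{e11}--\equref{e14}, and verify the associativity map termwise. The only cosmetic difference is that the paper packages the bicomodule half of this bookkeeping by citing \prref{prop2.12} and \leref{lem2.13} rather than re-expanding it, which is the same computation.
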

\begin{proof}
The ``only if" part of the first statement follows from \prref{prop2.12} and \leref{lem2.13}.
Now we show its ``if" part.
By \prref{prop2.12}, $C_3$ is a $C_1\ltimes_d C_2$-bicomodule with the comodule structure maps
$\rho_{l,C_3}^{C_1\ltimes_d C_2}$ and $\rho_{r,C_3}^{C_1\ltimes_d C_2}$ defined above.
Moreover, we have
$$\begin{array}{rl}
(\D_{C_3}\ot 1)\rho_{r,C_3}^{C_1\ltimes_d C_2}
=&(\D_{C_3}\ot 1)((1\ot\iota_{1})\rho_{r,C_3}^{C_1}+(1\ot\iota_{2})\rho_{r,C_3}^{C_2})\\
=&(\D_{C_3}\ot 1)(1\ot\iota_{1})\rho_{r,C_3}^{C_1}+(\D_{C_3}\ot 1)(1\ot\iota_{2})\rho_{r,C_3}^{C_2}\\
=&(1\ot1\ot\iota_{1})(\D_{C_3}\ot 1)\rho_{r,C_3}^{C_1}+(1\ot1\ot\iota_{2})(\D_{C_3}\ot 1)\rho_{r,C_3}^{C_2}\\
=&(1\ot1\ot\iota_{1})(1\ot \rho_{r,C_3}^{C_1})\D_{C_3}+(1\ot1\ot\iota_{2})(1\ot \rho_{r,C_3}^{C_2})\D_{C_3}\\
=&(1\ot(1\ot\iota_{1})\rho_{r,C_3}^{C_1})\D_{C_3}+(1\ot(1\ot\iota_{2})\rho_{r,C_3}^{C_2})\D_{C_3}\\
=&(1\ot(1\ot\iota_{1})\rho_{r,C_3}^{C_1}+1\ot(1\ot\iota_{2})\rho_{r,C_3}^{C_2})\D_{C_3}\\
=&(1\ot \rho_{r,C_3}^{C_1\ltimes_d C_2})\D_{C_3}.
\end{array}$$
Similarly, one can check that $(1\ot\D_{C_3})\rho_{l,C_3}^{C_1\ltimes_d C_2}=(\rho_{l,C_3}^{C_1\ltimes_d C_2}\ot 1)\D_{C_3}$
and $(\rho_{r,C_3}^{C_1\ltimes_d C_2}\ot 1)\D_{C_3}=(1\ot\rho_{l,C_3}^{C_1\ltimes_d C_2})\D_{C_3}$.
Hence $(C_1\ltimes_d C_2, C_3)\in \mathcal{DPC}$.  This shows the first statement of the proposition.

Now we show the second statement. By the hypotheses, $C_2$ and $C_3$ are both $C_1$-bicomodule.
Since $C_2\ltimes_d C_3=C_2\oplus C_3$ as vector spaces,
$C_2\ltimes_d C_3$ becomes naturally a $C_1$-bicomodule with the left and right comodule structure maps
$\rho_{l,C_2\ltimes_dC_3}^{C_1}$ and $\rho_{r,C_2\ltimes_dC_3}^{C_1}$ given above.
For any $c\in C_2$ and $d\in C_3$, we have
\begin{equation*}
\begin{split}
(\D_{C_2\ltimes_d C_3}\ot 1)\rho_{r,C_2\ltimes_d C_3}^{C_1}(c,d)
=&(\D_{C_2\ltimes_d C_3}\ot 1)(\sum (c_{(0)}^{(1)}, 0)\ot c_{(1)}^{(1)}
+\sum (0, d_{(0)}^{(1)})\ot d_{(1)}^{(1)})\\
=&\sum (c_{(0)1}^{(1)},0)\ot (c_{(0)2}^{(1)},0)\ot c_{(1)}^{(1)}
+\sum (d_{(0)(-1)}^{(1)(2)},0)\ot (0,d_{(0)(0)}^{(1)(2)})\ot d_{(1)}^{(1)}\\
&+\sum (0,d_{(0)(0)}^{(1)(2)})\ot (d_{(0)(1)}^{(1)(2)},0)\ot d_{(1)}^{(1)}
+\sum (0,d_{(0)1}^{(1)})\ot (0,d_{(0)2}^{(1)})\ot d_{(1)}^{(1)}\\
=&\sum (c_{1},0)\ot (c_{2(0)}^{(1)},0)\ot c_{2(1)}^{(1)}
+\sum (d_{(-1)}^{(2)},0)\ot (0,d_{(0)(0)}^{(2)(1)})\ot d_{(0)(1)}^{(2)(1)}\\
&+\sum (0,d_{(0)}^{(2)})\ot (d_{(1)(0)}^{(2)(1)},0)\ot d_{(1)(1)}^{(2)(1)}
+\sum (0,d_{1})\ot (0,d_{2(0)}^{(1)})\ot d_{2(1)}^{(1)}\\
=&(1\ot\rho_{r,C_2\ltimes_d C_3}^{C_1})(\sum (c_{1},0)\ot (c_{2},0)
+\sum (d_{(-1)}^{(2)},0)\ot (0,d_{(0)}^{(2)})\\
&\quad\quad\quad\quad\quad\quad\quad+\sum (0,d_{(0)}^{(2)})\ot (d_{(1)}^{(2)},0)
+\sum (0,d_{1})\ot (0,d_{2}))\\
=&(1\ot\rho_{r,C_2\ltimes_d C_3}^{C_1})\D_{C_2\ltimes _d C_3}(c,d).\\
\end{split}
\end{equation*}
This shows $(\D_{C_2\ltimes_d C_3}\ot 1)\rho_{r,C_2\ltimes_d C_3}^{C_1}
=(1\ot\rho_{r,C_2\ltimes_d C_3}^{C_1})\D_{C_2\ltimes _d C_3}$.
Similarly, one can show
$(1\ot \D_{C_2\ltimes_d C_3})\rho_{l,C_2\ltimes_d C_3}^{C_1}=(\rho_{l,C_2\ltimes_d C_3}^{C_1}\ot 1)\D_{C_2\ltimes _d C_3}$
and
$(\rho_{r,C_2\ltimes_d C_3}^{C_1}\ot 1)\D_{C_2\ltimes_d C_3}=(1\ot\rho_{l,C_2\ltimes_d C_3}^{C_1})\D_{C_2\ltimes _d C_3}$.
Therefore, $(C_1, C_2 \ltimes_d C_3)\in \mathcal{DPC}$.
It is straightforward to check that the map
$$(C_1\ltimes_d C_2) \ltimes_d C_3\ra C_1\ltimes_d (C_2\ltimes_d C_3),\
((c,d),e)\mapsto (c,(d,e))$$
is a  coalgebra isomorphism.
\end{proof}

\begin{remark}
For the two coalgebras $(C_1\ltimes_d C_2) \ltimes_d C_3$ and $C_1\ltimes_d (C_2\ltimes_d C_3)$ in \prref{prop2.14}, 
we can simply write $C_1\ltimes_d C_2\ltimes_d C_3$.
More generally, one can define a Dorroh $n$-tuple of coalgebras $(C_1,C_2,\cdots, C_n)$,
and form an iterated coalgebra Dorroh extension $C_1\ltimes_d C_2\ltimes_d \cdots \ltimes_d C_n$.
\end{remark}

\section{The relations between Dorroh extensions of coalgebras and algebras}
In this section, we firstly discuss the finite duals of algebras and modules.
We don't assume that an algebra has an identity. Hence the finite
dual of an algebra is different from that of a unital algebra. Then we give the relations between
Dorroh extension of algebras and Dorroh extension of coalgebras. Lastly we also describe
the finite duals of unital modules over a unital algebra.

Let $A$ be an algebra and $A^*=\Hom(A,k)$, the dual space of $A$.
Then $A^*$ is a right (resp., left) $A$-module defined by $(f\lhu a)(x)=f(ax)$ (resp., $(a\rhu f)(x)=f(xa)$)
for all $a, x\in A$ and $f\in A^*$. Moreover, $A^*$ is an $A$-bimodule.

Define $A^{\circ}:=\{f\in A^*\mid f(R)=0\text{ for some right ideal } R \text{ of } A \text{ of finite codimension}\}$.
Clearly, $A^{\circ}$ is a subspace of $A^*$ since the intersection of two right ideals of finite codimension
has still finite codimension. $A^{\circ}$ is called {\it the finite dual of $A$}.

\begin{proposition}\prlabel{prop3.1}
Let $(A,m)$ be an algebra and $f\in A^*$. Then the following are equivalent:
\begin{enumerate}
\item[(a)] $f$ vanishes on a right ideal of $A$ of finite codimension;
\item[(b)] $f$ vanishes on a left ideal of $A$ of finite codimension;
\item[(c)] $f(RA)=0$ for some right ideal $R$ of $A$ of finite codimension;
\item[(d)] $f(AL)=0$ for some left ideal $L$ of $A$ of finite codimension;
\item[(e)] $\dim (A\rhu f)<\infty$;
\item[(f)] $\dim ( f\lhu A)<\infty$;
\item[(g)] $m^*(f)\in A^*\ot A^*$.
\end{enumerate}
\end{proposition}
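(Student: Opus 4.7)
The plan is to prove the equivalences in a cycle around the symmetric condition (g), showing $(\text{a}) \Leftrightarrow (\text{e}) \Leftrightarrow (\text{g}) \Leftrightarrow (\text{f}) \Leftrightarrow (\text{b})$, and then attaching the two side equivalences $(\text{a}) \Leftrightarrow (\text{c})$ and $(\text{b}) \Leftrightarrow (\text{d})$. Routing everything through (g) is important because, in the non-unital setting, there is no direct way to convert a cofinite right ideal to a cofinite left ideal; only the tensor factorization in (g) is symmetric in the two sides.

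For (a) $\Rightarrow$ (e): if $f(R)=0$ for a cofinite right ideal $R$, then for any $a\in A$ and $x \in R$ we have $(a \rhu f)(x) = f(xa) = 0$ because $xa \in R$, so $A \rhu f$ is contained in the finite-dimensional annihilator $R^\perp$. For (e) $\Rightarrow$ (a), I would form the finite-dimensional subspace $V := kf + A \rhu f$ and note that the identity $a \rhu (b \rhu f) = (ab) \rhu f$ makes $V$ stable under $\rhu$; then $R := \bigcap_{g \in V} \ker g$ has finite codimension, and the computation $g(xa) = (a \rhu g)(x) = 0$ for $x \in R$, $a \in A$, $g \in V$ shows $R$ is a right ideal, with $f(R)=0$ since $f \in V$. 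The step (e) $\Leftrightarrow$ (g) is a direct translation: choosing a basis $\{f_i\}$ of $A \rhu f$ yields $c_i \in A^*$ with $a \rhu f = \sum_i c_i(a) f_i$, which reads back as $m^*(f) = \sum_i f_i \ot c_i$; conversely a decomposition $m^*(f) = \sum_i g_i \ot h_i$ gives $a \rhu f = \sum_i h_i(a) g_i \in \mathrm{span}\{g_i\}$. The chain (g) $\Leftrightarrow$ (f) $\Leftrightarrow$ (b) is then the entirely symmetric argument using the right action $\lhu$ and the identity $(f \lhu a) \lhu b = f \lhu (ab)$.

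For the side equivalences, (a) $\Rightarrow$ (c) is immediate since $RA \subseteq R$ for any right ideal $R$; conversely if $f(RA) = 0$ for a cofinite right ideal $R$, then $R' := R \cap \ker f$ still has finite codimension, and any $x \in R'$, $a \in A$ satisfy $xa \in R$ and $f(xa) \in f(RA) = 0$, hence $xa \in R'$ and $R'$ is the desired cofinite right ideal on which $f$ vanishes. The argument for (b) $\Leftrightarrow$ (d) is identical on the left. The main delicate point of the proof will be the non-unital subtlety in (e) $\Rightarrow$ (a) and (f) $\Rightarrow$ (b): because $f$ need not lie in $A \rhu f$ or $f \lhu A$, we must enlarge the cyclic submodule with an extra copy of $kf$ before intersecting kernels, and the bridging of (a) and (b) is only available through the symmetric statement (g).
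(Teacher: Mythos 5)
Your proposal is correct and follows essentially the same route as the paper: the same containment $A \rhu f \subseteq R^{\perp}$ for the forward directions, the same enlargement to $kf + A \rhu f$ before taking the annihilator in the converse (the key non-unital point, which you correctly flag), the same basis argument for (e)$\Leftrightarrow$(g), and the same use of (g) as the sole bridge between the right-handed and left-handed conditions. The only cosmetic difference is that you prove (c)$\Rightarrow$(a) directly by intersecting $R$ with $\ker f$, whereas the paper closes the cycle (a)$\Rightarrow$(c)$\Rightarrow$(e)$\Rightarrow$(a); both are equally routine.
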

\begin{proof}
(a)$\Rightarrow$(c) and (b)$\Rightarrow$(d) are obvious. Now we show
(c)$\Rightarrow$(e) and (e)$\Rightarrow$(a).  Firstly, let $R$ be a right ideal of $A$ of finite codimension such that $f(RA)=0$.
Define $R^{\perp}:=\{g\in A^*\mid g(R)=0\}$. Then $R^{\perp}$ is finite dimensional.
For any $a\in A$, $(a\rhu f)(R)=f(Ra)=0$. Thus, $A\rhu f\ss R^{\perp}$, and so $A\rhu f$ is finite dimensional.
This shows (c)$\Rightarrow$(e). Next, suppose $\dim (A\rhu f)<\infty$. Then $A\rhu f+kf$ is a finite dimensional
left $A$-submodule of $A^*$. Let $R=(A\rhu f+kf)^{\perp}:=\{a\in A|g(a)=0, \forall g\in (A\rhu f+kf)\}$.
Then $R$ is a right ideal of $A$ with finite codimension and $f(R)=0$. This shows (e)$\Rightarrow$(a).
Similarly, one can show (d)$\Rightarrow$(f) and (f)$\Rightarrow$(b).

(e)$\Leftrightarrow$(g) Suppose  $\dim (A\rhu f)<\infty$. Let $f_1,f_2,\cdots, f_n$ be a basis of $A\rhu f$. For any $a\in A$,
$a\rhu f=\sum_{i=1}^n g_i(a)f_i$ for some $g_i(a)\in k$. Clearly, $g_i\in A^*$ for all $1\<i\<n$.
For any $a, b\in A$,
$m^*(f)(a\ot b)=f(ab)=(b\rhu f)(a)=\sum_{i=1}^ng_i(b)f_i(a)=(\sum_{i=1}^n f_i\ot g_i)(a\ot b)$.
It follows that $m^*(f)=\sum_{i=1}^n f_i\ot g_i\in A^*\ot A^*$. Conversely, suppose $m^*(f)\in A^*\ot A^*$.
Then $m^*(f)=\sum_{i=1}^n f_i\ot g_i$ for some $f_i, g_i\in A^*$. Hence
for any $a, b\in A$, we have
$(a\rhu f)(b)=f(ba)=m^*(f)(b\ot a)=\sum_{i=1}^nf_i(b)g_i(a)=(\sum_{i=1}^ng_i(a)f_i)(b)$.
Thus, $a\rhu f=\sum_{i=1}^n g_i(a)f_i$ for any $a\in A$,
and so $A\rhu f$ is finite dimensional.

Similarly, one can show $(f)\Leftrightarrow (g)$.
\end{proof}

\begin{remark}
Since $A$ is not necessarily unital, $\dim (A\rhu f)<\infty $ implies  $\dim (A\rhu f\lhu A)<\infty $,
but the converse is not true. This is different from \cite[Lemma 9.1.1]{Mo}.
\end{remark}

\begin{proposition}\prlabel{prop3.3}
Let $(A,m)$ be an algebra. Then $m^*(A^{\circ})\ss A^{\circ}\ot A^{\circ}$. Thus, $(A^{\circ}, m^*)$ is a coalgebra.
\end{proposition}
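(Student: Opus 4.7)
The plan is to invoke \prref{prop3.1} twice: part (g) gives $m^*(f)\in A^*\ot A^*$ for any $f\in A^{\circ}$, and parts (a),(b) provide both a right ideal and a left ideal of finite codimension on which $f$ vanishes. These two pieces of information will be combined to force \emph{both} tensor factors of $m^*(f)$ into $A^{\circ}$.

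Starting from $f\in A^{\circ}$, I would use \prref{prop3.1} to produce a right ideal $R$ and a left ideal $L$ of $A$ of finite codimension with $f(R)=0=f(L)$, together with a representation $m^*(f)=\sum_{i=1}^n h_i\ot k_i\in A^*\ot A^*$ in which $n$ is minimal. A short standard argument (if $\{h_i\}$ satisfied a dependence relation one could substitute it in and shorten the sum, and likewise for $\{k_i\}$) forces both $\{h_i\}_{i=1}^n$ and $\{k_i\}_{i=1}^n$ to be linearly independent in $A^*$. The core observation is that $m^*(f)(a\ot b)=f(ab)$ vanishes whenever $a\in R$ (as then $ab\in R$) or $b\in L$ (as then $ab\in L$). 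Because $\{k_i\}$ is linearly independent in $A^*$, the linear map $A\ra k^n$, $b\mapsto(k_1(b),\ldots,k_n(b))$, is surjective, so one can choose $b_1,\ldots,b_n\in A$ with $k_i(b_j)=\d_{ij}$. For $a\in R$ this yields
\[
h_j(a)=\sum_{i=1}^n h_i(a)k_i(b_j)=m^*(f)(a\ot b_j)=f(ab_j)=0,
\]
so $h_j$ vanishes on $R$ and lies in $A^{\circ}$. The symmetric argument, using $L$ and linear independence of $\{h_i\}$, gives $k_j\in A^{\circ}$. Hence $m^*(f)\in A^{\circ}\ot A^{\circ}$.

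Coassociativity of $m^*\colon A^{\circ}\ra A^{\circ}\ot A^{\circ}$ is then a formal consequence of associativity of $m$: evaluating $(m^*\ot 1)m^*(f)$ and $(1\ot m^*)m^*(f)$ on a simple tensor $a\ot b\ot c\in A\ot A\ot A$ returns $f((ab)c)$ and $f(a(bc))$ respectively, and these coincide; equality in $A^{\circ}\ot A^{\circ}\ot A^{\circ}$ then lifts from equality in $(A\ot A\ot A)^*$ via the injectivity of the canonical map $A^{\circ}\ot A^{\circ}\ot A^{\circ}\hookrightarrow (A\ot A\ot A)^*$. The main subtlety, and the only place where the non-unitality of $A$ really matters, is the simultaneous use of a right ideal \emph{and} a left ideal in the previous paragraph: using only one side would place only one of $h_i$ or $k_i$ inside $A^{\circ}$, and this is exactly why \prref{prop3.1} is stated with the equivalence (a)$\Leftrightarrow$(b) built in.
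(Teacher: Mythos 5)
Your proof is correct, and it reaches the conclusion by a genuinely different (though closely parallel) route from the paper's. The paper follows Montgomery's argument: it takes $\{f_i\}$ to be a basis of the finite-dimensional orbit $A\rhu f$, writes $m^*(f)=\sum f_i\ot g_i$, observes that each $f_i=a_i\rhu f$, so that $A\rhu f_i=(Aa_i)\rhu f\ss A\rhu f$ is finite dimensional (criterion (e) of \prref{prop3.1}), and then uses a dual basis $f_i(a_j')=\d_{ij}$ to identify $g_j=f\lhu a_j'$, whence $g_j\lhu A\ss f\lhu A$ is finite dimensional (criterion (f)). You instead take a representation $\sum h_i\ot k_i$ of minimal length and invoke the ideal criteria (a), (b) of \prref{prop3.1}: the dual-basis evaluation shows that each $h_j$ vanishes on the very same right ideal $R$ of finite codimension as $f$, and each $k_j$ on the same left ideal $L$. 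Both routes pass through the dual-basis trick and both rest entirely on \prref{prop3.1}, but yours is marginally more economical in that it never needs to recognize the tensor factors as elements of the orbits $A\rhu f$ and $f\lhu A$, and it makes transparent exactly where the two-sidedness of the equivalence (a)$\Leftrightarrow$(b) is consumed --- which, as you note, is the point at which non-unitality genuinely intervenes. What the paper's version buys in exchange is the explicit identification $f_i=a_i\rhu f$ and $g_j=f\lhu a_j'$, which it reuses almost verbatim in the proofs of \prref{prop3.5}, \prref{prop3.9} and \leref{lem3.11}; your argument would need to be supplemented by that identification if one wanted to feed it into those later proofs. The coassociativity step is handled identically in both versions.
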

\begin{proof}
Let $0\neq f\in A^{\circ}$. Similarly to the proof of \cite[Proposition 9.1.2]{Mo},
let $\{f_1,f_2,\cdots, f_n\}$ be a basis of $A\rhu f$. Then $m^*(f) =\sum f_i\ot g_i$
for some $g_i\in A^*$. Since each $f_i\in A\rhu f$, there exist $a_1,a_2,\cdots , a_n\in A$
such that $f_i=a_i\rhu f$. Then $A\rhu f_i=A\rhu (a_i\rhu f)=(Aa_i)\rhu f\ss A\rhu f$.
This implies that $A\rhu f_i$ is finite dimensional, and so
$f_i\in A^{\circ}$. Since $f_1,f_2,\cdots , f_n$ are linearly independent, we may choose
$a_1',a_2',\cdots, a_n'\in A$ such that $f_i(a_j')=\d_{ij}$ by \cite[Lemma 2.2.9]{Abe}.
Then $f\lhu a_j'=\sum f_i(a_j')g_i=g_j$, and hence $g_j\in f\lhu A$. Thus,
$g_j\lhu A=(f\lhu a_j')\lhu A=f\lhu (a_j'A)\ss f\lhu A$, which is finite dimensional. So $g_j \in A^{\circ}$.

The coassociativity of $\D=m^*$ follows by the associativity of $m$.
\end{proof}

Let $M$ be a left (resp., right) $A$-module. Let $M^*=\Hom(M,k)$ be the dual space of $M$.
Then $M^*$ is a right (resp. left) $A$-module with $(f\lhu a)(x)=f(ax)$ (resp., $(a\rhu f)(x)=f(xa)$)
for all $a\in A$, $x\in M$, $f\in M^*$. Moreover, if $M$ is an $A$-bimodule then so is $M^*$.

For a right (or left) $A$-module $M$, define
$$M_{r(A)}^{\circ} (\text{ or }M_{l(A)}^{\circ}):=\{f\in M^*\mid f(N)
=0 \text{ for some submodule }N \text{ of } M \text{ with } \text{dim}(M/N)<\infty\}.$$
$M_{r(A)}^{\circ}$ (or $M_{l(A)}^{\circ}$) is called {\it the finite dual of the right (or left) $A$-module $M$},
and written as $M_r^{\circ}$ (or $M_l^{\circ}$) simply if there is no ambiguity.

\begin{proposition}\prlabel{prop3.4}
Let $M$ be a right $A$-module and $f\in M^*$. Then the following are equivalent:
\begin{enumerate}
\item[(a)] $f(N)=0$ for some $A$-submodule $N$ of $M$ with $\dim(M/N)<\infty$, i.e., $f\in M_r^{\circ}$;
\item[(b)] $f(NA)=0$ for some $A$-submodule $N$ of $M$ with $\dim(M/N)<\infty$;
\item[(c)] $f(ML)=0$ for some left ideal $L$ of $A$ with $\dim(A/L)<\infty$;
\item[(d)] $\dim (A\rhu f)<\infty$;
\item[(e)] $\dim (( -\rhu f)(M))<\infty$;
\item[(f)] $\rho_r(f)\in M^*\ot A^*$, where $\rho_r:M^*\ra ( M\ot A)^*$ is the dual map of the right action map $M\ot A\ra M$.
\end{enumerate}
\end{proposition}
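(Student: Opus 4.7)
The plan is to mirror the proof of \prref{prop3.1}, organized around the central condition (d), which I will use as a hub connecting to (a)--(b), to (c), and to (e)--(f).

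I would first establish the cycle (a) $\Rightarrow$ (b) $\Rightarrow$ (d) $\Rightarrow$ (a). The step (a) $\Rightarrow$ (b) is immediate since $N$ is a right $A$-submodule, so $NA\ss N$ and $f(N)=0$ forces $f(NA)=0$. For (b) $\Rightarrow$ (d), if $f(NA)=0$ then for every $a\in A$ and $n\in N$ we have $(a\rhu f)(n)=f(na)=0$, so $A\rhu f$ lies inside the finite-dimensional annihilator $N^{\perp}:=\{g\in M^*\mid g(N)=0\}$. For (d) $\Rightarrow$ (a), I would take the finite-dimensional subspace $V:=A\rhu f+kf\ss M^*$ and set $N:=V^{\perp}$; then $f\in V$ gives $f(N)=0$ and $\dim(M/N)\<\dim V<\infty$, and the essential verification is that $N$ is a right $A$-submodule, which follows from the identity $(b\rhu f)(xa)=f(xab)=(ab\rhu f)(x)$ together with $f(xa)=(a\rhu f)(x)$, both of which keep the result inside $A\rhu f\ss V$.

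I would then close the loop with (c) via the linear map $\psi:A\ra M^*$, $a\mapsto a\rhu f$. Condition (c) forces $\psi(L)=0$, so $\psi$ factors through $A/L$ and $\dim(A\rhu f)\<\dim(A/L)<\infty$, giving (c) $\Rightarrow$ (d). Conversely, taking $L:=\ker\psi$ yields a subspace of $A$ of codimension $\dim(A\rhu f)<\infty$; it is a left ideal of $A$ because $(ba\rhu f)(x)=f(x(ba))=f((xb)a)=(a\rhu f)(xb)=0$ for $a\in L$, $b\in A$, $x\in M$, and by construction $f(ML)=0$. Finally, (d) $\Leftrightarrow$ (f) $\Leftrightarrow$ (e) use the basis-expansion argument of \prref{prop3.1}: writing $a\rhu f=\sum_i g_i(a)f_i$ for a basis $\{f_i\}$ of $A\rhu f$ and suitable $g_i\in A^*$ gives $\rho_r(f)(x\ot a)=f(xa)=(a\rhu f)(x)=\sum_i f_i(x)g_i(a)$, hence $\rho_r(f)=\sum_i f_i\ot g_i\in M^*\ot A^*$; the converse is immediate by reading the formula in reverse. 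The equivalence with (e) is the same calculation performed on the other tensor factor, since both $\dim(A\rhu f)$ and $\dim((-\rhu f)(M))$ compute the rank of the bilinear pairing $(x,a)\mapsto f(xa)$, equivalently the minimal tensor length of $\rho_r(f)$.

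The main obstacle is the closure step in (d) $\Rightarrow$ (a): the naive candidate $f^{-1}(0)$ is not an $A$-submodule of $M$, so one is forced to enlarge the annihilated set to $V=A\rhu f+kf$ before taking $V^{\perp}$, and then to invoke the compatibility $(b\rhu f)(xa)=(ab\rhu f)(x)$, i.e.\ the fact that $A\rhu f$ is a left $A$-stable subspace of $M^*$. Once this is set up, the remaining equivalences are essentially bookkeeping adapted from \prref{prop3.1}, with each condition tied to (d).
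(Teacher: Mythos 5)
Your proof is correct and follows essentially the same route as the paper's: the cycle (a)$\Rightarrow$(b)$\Rightarrow$(d)$\Rightarrow$(a) via the annihilator of $A\rhu f+kf$, and the basis-expansion argument for (d)$\Leftrightarrow$(f), are exactly the arguments the paper imports from Proposition 3.1. The only cosmetic difference is that you link (c) directly to (d) (the paper links (c) to (e)) and handle (e) by the symmetric-rank observation, but the left ideal $\ker\psi$ you construct is literally the paper's $((-\rhu f)(M))^{\perp}$, so the content is identical.
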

\begin{proof}
Note that $(-\rhu f)(x)=f(x-)$ is an element in $ A^*$ for any $x\in M$.
Hence $(-\rhu f )(M)=f(M-)$ is a subspace of $A^*$.

For (a)$\Rightarrow$(b)$\Rightarrow$(d)$\Rightarrow$(a),
the proofs are similar to the proofs of (a)$\Rightarrow$(c)$\Rightarrow$(e)$\Rightarrow$(a) of
\prref{prop3.1}. For (d)$\Leftrightarrow$(f), the proof is similar to the proof of (e)$\Leftrightarrow$(g)
of \prref{prop3.1}.

(c)$\Leftrightarrow$(e)  Let $L$ be a left ideal of $A$ of finite codimension such that $f(ML)=0$.
Then $L^{\perp}:=\{g\in A^*\mid g(L)=0\}$ is finite dimensional.
Since $( -\rhu f)(x)(L)=(L\rhu f)(x)=f(xL)=0$ for any $x\in M$,
$( -\rhu f)(M)\ss L^{\perp}$, and hence  $( -\rhu f)(M)$ is finite dimensional.
Conversely, suppose $\dim (( -\rhu f)(M))<\infty$.
Let $L=((-\rhu f)(M))^{\perp}:=\{a\in A\mid (a\rhu f)(M)=f(Ma)=0\}$.
Then $L$ is a left ideal of $A$ with finite codimension and $f(ML)=0$.

(e)$\Leftrightarrow$(f) Suppose $\dim (( -\rhu f)(M))<\infty$.
Let $v_1,v_2,\cdots, v_n$ be a basis of $(-\rhu f)(M)$. Then for any $x\in M$,
$(-\rhu f)(x)=\sum_{i=1}^n l_i(x)v_i\in A^*$ for some $l_i(x)\in k$. Clearly, $l_i\in M^*$ for all $1\<i\<n$.
Moreover, for any $x\in M$ and $a\in A$, we have
$\rho_r(f)(x\ot a)=f(xa)=(a\rhu f)(x)=\sum_{i=1}^n l_i(x)v_i(a)=(\sum_{i=1}^n l_i\ot v_i)(x\ot a)$,
and so $\rho_r(f)=\sum_{i=1}^n l_i\ot v_i\in M^*\ot A^*$.
Conversely, suppose $\rho_r(f)=\sum_{i=1}^n l_i\ot v_i\in M^*\ot A^*$. Then
$(a\rhu f)(x)=f(xa)=\rho_r(f)(x\ot a)=\sum_{i=1}^nl_i(x)v_i(a)=(\sum_{i=1}^nl_i(x)v_i)(a)$
for all $a\in A$ and $x\in M$.
Hence $(-\rhu f)(x)=\sum_{i=1}^nl_i(x)v_i$ for all $x\in M$ and so $(-\rhu f)(M)$ is finite dimensional.
\end{proof}

\begin{proposition}\prlabel{prop3.5}
Let $M$ be a right $A$-module. Then $\rho_r(M_r^{\circ})\ss M_r^{\circ}\ot A^{\circ}$.
Moreover $(M_r^{\circ}, \rho_r)$ is a right $A^{\circ}$-comodule.
\end{proposition}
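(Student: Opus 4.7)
My plan is to first establish the containment $\rho_r(M_r^{\circ})\ss M_r^{\circ}\ot A^{\circ}$; once that is in hand, the right $A^{\circ}$-comodule axiom will follow by dualising the associativity $(xa)b=x(ab)$ of the right module action.

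For the containment, I would take $f\in M_r^{\circ}$ and use \prref{prop3.4} to get $\rho_r(f)\in M^*\ot A^*$. Choosing a minimal representation $\rho_r(f)=\sum_{i=1}^n l_i\ot v_i$ with both $\{l_i\}\ss M^*$ and $\{v_i\}\ss A^*$ linearly independent, the associativity identity $f((xa)b)=f(x(ab))$ translates, upon evaluating $\rho_r(f)$ on $xa\ot b$ and on $x\ot ab$, into
\begin{equation*}
\sum_i l_i(xa)\,v_i=\sum_i l_i(x)\,(a\rhu v_i)\quad\text{in }A^*,
\end{equation*}
for all $x\in M$ and $a\in A$. Specialising $x$ at a dual family $y_k\in M$ satisfying $l_i(y_k)=\d_{ik}$ (which exists by linear independence of the $l_i$) produces $a\rhu v_k=\sum_i l_i(y_k a)\,v_i$, so $A\rhu v_k$ lies in $\mathrm{span}\{v_1,\ldots,v_n\}$ and hence $v_k\in A^{\circ}$ by \prref{prop3.1}. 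Next, writing $a\rhu v_i=\sum_j t_{ij}(a)\,v_j$ with $t_{ij}\in A^*$ (uniquely determined and linear in $a$ by linear independence of $\{v_j\}$) and comparing coefficients of $v_j$ in the identity above yields $l_j(xa)=\sum_i l_i(x)t_{ij}(a)$, hence $a\rhu l_j=\sum_i t_{ij}(a)\,l_i\in\mathrm{span}\{l_1,\ldots,l_n\}$ and so $l_j\in M_r^{\circ}$ by \prref{prop3.4}. Thus $\rho_r(f)\in M_r^{\circ}\ot A^{\circ}$.

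For the comodule axiom $(\rho_r\ot 1)\rho_r=(1\ot\D)\rho_r$ on $M_r^{\circ}$, the formulas just derived give $\rho_r(l_j)=\sum_i l_i\ot t_{ij}$ and $\D(v_i)=m^*(v_i)=\sum_j t_{ij}\ot v_j$, so both iterated coactions equal $\sum_{i,j}l_i\ot t_{ij}\ot v_j$. (This is merely the dualisation of $\alpha\circ(\alpha\ot 1_A)=\alpha\circ(1_M\ot m)$, where $\alpha:M\ot A\to M$ is the right action.) I foresee no real obstacle; the only point requiring attention is that the representation $\rho_r(f)=\sum l_i\ot v_i$ be chosen minimal, with both families linearly independent, so that the coefficient comparisons and the dual family $\{y_k\}$ are legitimate.
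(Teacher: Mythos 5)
Your proof is correct and follows essentially the same route as the paper's: a minimal representation $\rho_r(f)=\sum l_i\ot v_i$ with both families independent, dual families to pin down the translates of each tensor factor and conclude $v_k\in A^{\circ}$ and $l_j\in M_r^{\circ}$ via \prref{prop3.1} and \prref{prop3.4}, and coassociativity read off from $(xa)b=x(ab)$. The only slip is notational: under the paper's conventions $(a\rhu v)(b)=v(ba)$ and $(v\lhu a)(b)=v(ab)$, so the functional $b\mapsto v_i(ab)$ arising in your key identity is $v_i\lhu a$ rather than $a\rhu v_i$; this is harmless, since parts (e) and (f) of \prref{prop3.1} both characterize membership in $A^{\circ}$.
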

\begin{proof}
Let  $0\neq f\in M_r^{\circ}$. By \prref{prop3.4}, $\dim (A\rhu f)<\infty$, $\dim (( -\rhu f)(M))<\infty$
and $\rho_r(f)\in M^*\ot A^*$.
Hence we may write $\rho_r(f)=\sum_{i=1}^n f_i\ot g_i$ with $n$ minimal, where $f_i\in M^*$  and $g_i\in A^*$,
$1\<i\<n$. Then $\{f_1, f_2, \cdots, f_n\}$ and $\{g_1, g_2, \cdots, g_n\}$ are both linearly independent.
Thus, one may choose $x_1,x_2,\cdots, x_n\in M$ and $a_1, a_2, \cdots, a_n\in A$ such that $f_i(x_j)=g_i(a_j)=\delta_{ij}$, $1\<i, j\<n$.
In this case, $a_j\rhu f=\sum_{i=1}^ng_i(a_j)f_i=f_j$, and so $A\rhu f_j=A\rhu(a_j\rhu f)=Aa_j\rhu f\subseteq A\rhu f$,
$1\<j\<n$. Hence $\dim (A\rhu f_j)\<\dim (A\rhu f)<\infty$, and so $f_j\in M_r^{\circ}$ by \prref{prop3.4}, $1\<j\<n$.
Now for any $a\in A$ and $1\<j\<n$,
$f(x_ja)=\rho_r(f)(x_j\ot a)=\sum_{i=1}^n f_i(x_j)g_i(a)=g_j(a)$. Hence $g_j=f(x_j-)$ in $A^*$.
Thus, for any $a, b\in A$ and $1\<j\<n$, we have  $(g_j\lhu a)(b)=g_j(ab)=f(x_j(ab))=f((x_ja)b)$,
and so $g_j\lhu a=f((x_ja)-)=(-\rhu f)(x_ja)$. This shows that  $g_j\lhu A\ss (-\rhu f)(M)$, and hence
$g_j\lhu A$ is finite dimensional, $1\<j\<n$. By \prref{prop3.1},
$g_1, g_2, \cdots, g_n\in A^{\circ}$. It follows that $\rho_r(M_r^{\circ})\ss M_r^{\circ}\ot A^{\circ}$.

Finally, for any $f\in M_r^{\circ}$, we have $\rho_r(f)=\sum f_i\ot g_i$ for some $f_i\in M_r^{\circ}$ and
$g_i\in A^{\circ}$, and so
$$\begin{array}{c}
((1\ot \D)\rho_r(f))(x\ot a\ot b)=\sum_{i=1}^n f_i(x)\D(g_i)(a\ot b)=\sum_{i=1}^n f_i(x)g_i(ab)=f(x(ab)),\\
((\rho_r\ot 1)\rho_r(f))(x\ot a\ot b)=\sum_{i=1}^n \rho_r(f_i)(x\ot a)g_i(b)=\sum_{i=1}^n f_i(xa)g_i(b)=f((xa)b)
\end{array}$$
for any $x\in M$ and $a, b\in A$.
Hence $(1\ot \D)\rho_r=(\rho_r\ot 1)\rho_r$ since $x(ab)=(xa)b$.
This implies that $(M_r^{\circ}, \rho_r)$ is a right $A^{\circ}$-comodule.
\end{proof}

For left $A$-modules, we have similar results.

\begin{proposition}\prlabel{prop3.6}
Let $M$ be a left $A$-module and $f\in M^*$. Then the following are equivalent:
\begin{enumerate}
\item[(a)] $f(N)=0$ for some $A$-submodule $N$ of $M$ of finite codimension, i.e., $f\in M_l^{\circ}$;
\item[(b)] $f(AN)=0$ for some $A$-submodule $N\ss M$ of finite codimension;
\item[(c)] $f(RM)=0$ for some right ideal $R$ of $A$ of finite codimension;
\item[(d)] $\dim ( f\lhu A)<\infty$;
\item[(e)] $\dim ( f\lhu-)(M)<\infty$;
\item[(f)] $\rho_l(f)\in A^*\ot M^*$, where $\rho_l:M^*\ra ( A\ot M)^*$ is the dual map of the left action map
$A\ot M\ra M$.
\end{enumerate}
\end{proposition}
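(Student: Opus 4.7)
The plan is to mirror the proof of \prref{prop3.4} with the roles of left and right exchanged. Since $M$ is now a left $A$-module, its dual $M^*$ is a \emph{right} $A$-module via $(f \lhu a)(x) = f(ax)$, the relevant ideals of $A$ become right ideals, and the characterizing map $\rho_l$ lands in $A^* \ot M^*$ rather than $M^* \ot A^*$. All the arguments are structurally identical to those of \prref{prop3.4} (and \prref{prop3.1}); only the indexing of actions and the order of tensor factors change.

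First I would establish the chain (a)$\Rightarrow$(b)$\Rightarrow$(d)$\Rightarrow$(a). The implication (a)$\Rightarrow$(b) is immediate: if $f(N) = 0$ for an $A$-submodule $N$ then $AN \ss N$, so $f(AN) = 0$. For (b)$\Rightarrow$(d), if $f(AN) = 0$ with $\dim(M/N) < \infty$, then for every $a \in A$ the functional $f \lhu a$ vanishes on $N$, hence $f \lhu A \ss N^{\perp}$, which is finite-dimensional. For (d)$\Rightarrow$(a), assuming $\dim(f \lhu A) < \infty$, set $N = (f \lhu A + kf)^{\perp}$; this is an $A$-submodule of $M$ of finite codimension on which $f$ vanishes.

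Next I would prove (c)$\Leftrightarrow$(e). Note that for each $x \in M$ the map $a \mapsto f(ax)$ defines an element $(f \lhu -)(x) \in A^*$, so $(f \lhu -)(M)$ is a subspace of $A^*$. If $f(RM) = 0$ for a right ideal $R$ of finite codimension, then $(f \lhu -)(M) \ss R^{\perp}$, whence (e). Conversely, given (e), set $R := \{a \in A \mid f(aM) = 0\}$; this is a right ideal of $A$ (since $f((ab)M) = f(a(bM)) \ss f(aM)$), and $\dim(A/R) \< \dim (f \lhu -)(M) < \infty$. The equivalences (d)$\Leftrightarrow$(f) and (e)$\Leftrightarrow$(f) follow by picking a basis of the finite-dimensional space in question and expanding $\rho_l(f)$ in terms of it, exactly as in the proofs of the analogous equivalences in \prref{prop3.1} and \prref{prop3.4}.

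There is no genuine obstacle here beyond careful bookkeeping: one must swap every ``left'' with ``right'' and remember that the $A^*$-valued factor now appears first in the tensor product. The one minor subtlety worth flagging is that when reading off $\rho_l(f)$ from a basis expansion of $(f \lhu -)(M)$, the order must be $\rho_l(f) = \sum v_i \ot l_i \in A^* \ot M^*$, matching the order of factors in the left action map $A \ot M \ra M$ being dualized.
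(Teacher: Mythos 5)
Your proposal is correct and is exactly the argument the paper intends: the paper's own proof of this proposition consists of the single line ``It is similar to the proof of Proposition \ref{pr:prop3.4},'' and your left--right transposition (right $A$-module structure on $M^*$, right ideals of $A$, and the factor order $A^*\ot M^*$ in $\rho_l(f)$) carries that out faithfully. All the individual steps you give check out, including the verification that $(f\lhu A+kf)^{\perp}$ is an $A$-submodule of finite codimension and that $\{a\in A\mid f(aM)=0\}$ is a right ideal.
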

\begin{proof}
It is similar to the proof of \prref{prop3.4}.
\end{proof}

\begin{proposition}\prlabel{prop3.7}
Let $M$ be a left $A$-module. Then $\rho_l(M_l^{\circ})\ss A^{\circ}\ot M_l^{\circ}$.
Moreover, $(M_l^{\circ}, \rho_l)$ is a left $A^{\circ}$-comodule.
\end{proposition}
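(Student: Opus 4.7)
The plan is to mirror the proof of \prref{prop3.5}, exchanging the roles of left and right throughout, since \prref{prop3.7} is literally the left-module analog (and \prref{prop3.6} gives us the left-module analog of the characterizations used there). All the hard characterization work has already been done in \prref{prop3.1} and \prref{prop3.6}; what remains is a bookkeeping argument with biorthogonal dual bases followed by the standard comodule-axiom computation.

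For the containment $\rho_l(M_l^{\circ})\ss A^{\circ}\ot M_l^{\circ}$, I would take $0\neq f\in M_l^{\circ}$ and use \prref{prop3.6}(f) to write $\rho_l(f)=\sum_{i=1}^n g_i\ot f_i$ with $n$ minimal, so that $\{g_1,\ldots,g_n\}\ss A^*$ and $\{f_1,\ldots,f_n\}\ss M^*$ are both linearly independent. By \cite[Lemma 2.2.9]{Abe} I can then select $a_1,\ldots,a_n\in A$ and $x_1,\ldots,x_n\in M$ with $g_i(a_j)=\d_{ij}=f_i(x_j)$. Evaluating $\rho_l(f)(a_j\ot x)=f(a_jx)=(f\lhu a_j)(x)$ on one hand and $\sum_i g_i(a_j)f_i(x)=f_j(x)$ on the other identifies $f_j=f\lhu a_j$, whence $f_j\lhu A=f\lhu(a_jA)\ss f\lhu A$ is finite dimensional and so $f_j\in M_l^{\circ}$ by \prref{prop3.6}. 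Symmetrically, evaluating at $a\ot x_j$ identifies $g_j=(f\lhu-)(x_j)$ in $A^*$, and then for any $b\in A$ associativity of the left action gives $(b\rhu g_j)(a)=g_j(ab)=f((ab)x_j)=f(a(bx_j))=(f\lhu-)(bx_j)(a)$. Hence $A\rhu g_j\ss (f\lhu-)(M)$, which is finite dimensional by \prref{prop3.6}(e), and so $g_j\in A^{\circ}$ by \prref{prop3.1}.

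With $\rho_l(f)\in A^{\circ}\ot M_l^{\circ}$ established, the coassociativity axiom $(\D\ot 1)\rho_l=(1\ot\rho_l)\rho_l$ reduces to evaluating both sides on $a\ot b\ot x$: the left side yields $\sum g_i(ab)f_i(x)=f((ab)x)$ and the right side yields $\sum g_i(a)f_i(bx)=f(a(bx))$, so equality is exactly the associativity of the left $A$-action on $M$. This establishes that $(M_l^{\circ},\rho_l)$ is a left $A^{\circ}$-comodule.

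I do not expect a real obstacle: the only delicate point is keeping the left/right conventions straight, in particular remembering that for a left $A$-module $M$ the dual $M^*$ carries a right $A$-action $(f\lhu a)(x)=f(ax)$, so that the biorthogonal extraction produces $g_j$ which must be shown to lie in $A^{\circ}$ via the right-orbit criterion $\dim(A\rhu g_j)<\oo$ of \prref{prop3.1}, and that this pivots precisely on associativity $(ab)x=a(bx)$. Everything else is formal.
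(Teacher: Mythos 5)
Your proposal is correct and is exactly the paper's approach: the paper's own proof of this proposition is literally ``It is similar to the proof of \prref{prop3.5}'', and your argument is that left/right mirroring carried out in detail, with the dual-basis extraction, the identifications $f_j=f\lhu a_j$ and $g_j=(f\lhu -)(x_j)$, and the appeal to \prref{prop3.1} and \prref{prop3.6} all landing where they should. The coassociativity check via $f((ab)x)=f(a(bx))$ likewise matches the paper's computation in the right-module case.
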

\begin{proof}
It is similar to the proof of \prref{prop3.5}.
\end{proof}

For an $A$-bimodule  $M$, define $M_{(A)}^{\circ}:=M_l^{\circ}\cap M_r^{\circ}$,
called {\it the finite dual of $A$-bimodule $M$}.
If there is no ambiguity, we write $M^{\circ}$ for $M_{(A)}^{\circ}$.

\begin{proposition}\prlabel{prop3.9}
Let $M$ be an $A$-bimodule. Then $\rho_l(M^{\circ})\ss A^{\circ}\ot M^{\circ}$ and
$\rho_r(M^{\circ})\ss M^{\circ}\ot A^{\circ}$.
Moreover, $(M^{\circ}, \rho_l, \rho_r)$ is an $A^{\circ}$-bicomodule.
\end{proposition}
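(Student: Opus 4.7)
The plan is to upgrade the one-sided conclusions of \prref{prop3.5} and \prref{prop3.7} to the bimodule setting and then to verify the bicomodule compatibility. Fix $f\in M^{\circ}=M_l^{\circ}\cap M_r^{\circ}$; \prref{prop3.5} and \prref{prop3.7} immediately give $\rho_r(f)\in M_r^{\circ}\ot A^{\circ}$ and $\rho_l(f)\in A^{\circ}\ot M_l^{\circ}$, so the issue is to refine the ``module-valued'' tensor factors so that they lie inside $M^{\circ}$.

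The essential observation, which I would record at the outset, is that when $M$ is a bimodule the induced left and right $A$-actions on $M^*$ commute: $(a\rhu g)\lhu b=a\rhu(g\lhu b)$ for all $a,b\in A$ and $g\in M^*$, which is a direct translation of $(bx)a=b(xa)$ in $M$. Starting from a minimal decomposition $\rho_r(f)=\sum_{i=1}^n f_i\ot g_i$ as in the proof of \prref{prop3.5}, I would pick $a_1,\cdots,a_n\in A$ with $g_i(a_j)=\d_{ij}$ and deduce $f_j=a_j\rhu f$. Then
\[
f_j\lhu A=(a_j\rhu f)\lhu A=a_j\rhu(f\lhu A),
\]
which is the image under a linear map of the finite dimensional space $f\lhu A$ (finite because $f\in M_l^{\circ}$), and hence is finite dimensional. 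By \prref{prop3.6} this places $f_j$ in $M_l^{\circ}$, and therefore in $M^{\circ}$. A symmetric argument starting from a minimal decomposition of $\rho_l(f)$ shows that its second tensor factor lies in $M_r^{\circ}$, hence in $M^{\circ}$.

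To finish, I would verify the bicomodule compatibility $(1\ot\rho_r)\rho_l=(\rho_l\ot 1)\rho_r$ as maps $M^{\circ}\ra A^{\circ}\ot M^{\circ}\ot A^{\circ}$ by evaluating both sides at a pure tensor $a\ot x\ot b\in A\ot M\ot A$. Unpacking the definitions of $\rho_l$ and $\rho_r$ as duals of the action maps, the two sides collapse respectively to $f(a(xb))$ and $f((ax)b)$, which agree by the bimodule axiom $(ax)b=a(xb)$. I expect the main obstacle to be the promotion step: taking intersections with $M_l^{\circ}$ can in principle destroy a tensor factorization obtained from \prref{prop3.5}, and absent the bimodule hypothesis there would be no reason for the first factor of $\rho_r(f)$ to lie in $M_l^{\circ}$. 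Recognising each $f_j$ as $a_j\rhu f$ and then exploiting the commuting-actions identity is the crux; once this is secured the rest is bookkeeping with the equivalences established in \prref{prop3.1}--\prref{prop3.7}.
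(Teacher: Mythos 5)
Your argument is correct and is essentially the paper's proof: both identify the tensor factors of a minimal decomposition of $\rho_r(f)$ as translates $f_j=a_j\rhu f$, use the bimodule axiom to transfer finiteness of $f\lhu A$ to $f_j\lhu A$, and then check the compatibility $(\rho_l\ot 1)\rho_r=(1\ot\rho_r)\rho_l$ by evaluation on $a\ot x\ot b$, reducing to $(ax)b=a(xb)$. The only (cosmetic) difference is that you package the key step as the clean identity $f_j\lhu A=a_j\rhu(f\lhu A)$, whereas the paper expands $f_i\lhu a$ in a basis $\{h_j\}$ of $f\lhu A$; the content is identical.
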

\begin{proof}
Let $0\neq f\in M^{\circ}$. Then by the proof of \prref{prop3.5},
$\rho_r(f)=\sum_{i=1}^n f_i\ot g_i\in M_r^{\circ}\ot A^{\circ}$ and $f_i=a_i\rhu f$ for some $a_i\in A$.
Similarly, we have $\rho_l(f)=\sum_{i=1}^m l_i\ot h_i\in A^{\circ}\ot M_l^{\circ} $
and $h_i=f\lhu b_i$ for some $b_i\in A$.
Hence $a\rhu f=\sum_{i=1}^ng_i(a)f_i$ and $f\lhu a=\sum_{i=1}^m l_i(a)h_i$ for all $a\in A$.
For $a\in A$, $x\in M$ and $1\<i\<n$, $(f_i\lhu a)(x)= (a_i\rhu f)(ax)=f((ax)a_i)=f(axa_i)
=(f\lhu a)(xa_i)=\sum_{j=1}^m l_j(a)h_j(xa_i)=\sum_{j=1}^m l_j(a)(a_i\rhu h_j)(x)$,
and hence $f_i\lhu a=\sum_{j=1}^m l_j(a)(a_i\rhu h_j)$.
This shows that $f_i\lhu A\subseteq{\rm span}\{a_i\rhu h_j \mid 1\<j\<m\}$.
Hence ${\rm dim}(f_i\lhu A)<\infty$, and so  $f_i\in M_l^{\circ}$ by \prref{prop3.6}.
Thus, $f_i\in M^{\circ}$, which show that $\rho_r(f)=\sum_{i=1}^n f_i\ot g_i\in M^{\circ}\ot A^{\circ}$.
Similarly, one can show that $\rho_l(f)=\sum_{i=1}^m l_i\ot h_i\in A^{\circ}\ot M^{\circ}$.

Finally, it is easy to see that $(\rho_l\ot 1)\rho_r=(1\ot \rho_r)\rho_l$ since $a(xb)=(ax)b$ for any $a, b\in A$ and $x\in M$.
Therefore, $(M^{\circ}, \rho_l, \rho_r)$ is an $A^{\circ}$-bicomodule.
\end{proof}

\begin{remark}
For an algebra $A$, let $M= A$, the regular left and right module. Then $A_l^{\circ}=A_r^{\circ}=A^{\circ}$.
That is, the finite dual of the algebra $A$ coincides with the finite dual of the left (or right) regular $A$-module,
and the regular $A$-bimodule.
\end{remark}

Let $(A,I)$ be a Dorroh pair of algebras. Then $I$ is an algebra and an $A$-bimodule.
Let  $I^{\circ}$ denote the finite dual  of the algebra $I$, and $I_{(A)}^{\circ}$ denote the finite dual  of the $A$-bimodule $I$.
Then $I^{\circ}$ is a coalgebra and $I_{(A)}^{\circ}$ is an $A^{\circ}$-bicomodule.
Define $I^d:=I^{\circ}\cap I_{(A)}^{\circ}$.

\begin{lemma}\lelabel{lem3.11}
Let $(A,I)$ be a Dorroh pair of algebras. Then
\begin{enumerate}
\item[(a)] $I^d$ is a subcoalgebra of $I^{\circ}$.
\item[(b)] $I^d$ is an $A^{\circ}$-subbicomodule of $I^{\circ}_{(A)}$.
\item[(c)] The comodule actions $\rho_l$ and $\rho_r$ of $I^d$ are compatible with its comultiplication $\D$, i.e.,
$(\D\ot 1)\rho_r=(1\ot \rho_r)\D$, $(1\ot\D)\rho_l=(\rho_l\ot 1)\D$ and
$(\rho_r\ot 1)\D=(1\ot \rho_l)\D$.
 \end{enumerate}
\end{lemma}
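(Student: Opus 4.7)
The plan is to treat (a), (b), (c) in order, with (c) essentially a direct computation and (a), (b) requiring the explicit representations of $\D(f)$, $\rho_l(f)$, $\rho_r(f)$ provided by the proofs of \prref{prop3.3}, \prref{prop3.7}, and \prref{prop3.9}. The common engine throughout is the Dorroh compatibility \equref{d1}, which converts translates of $f$ under the $A$-bimodule action on $I^*$ into translates under the action induced by the multiplication of $I$, and conversely.

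For (a), fix $f\in I^d$ and apply the proof of \prref{prop3.3} to the algebra $I$ to write $\D(f)=\sum_{i=1}^n f_i\ot g_i$, where $f_i(y)=f(yx_i)$ for some $x_i\in I$ and $g_j(y)=f(a_j'y)$ for some $a_j'\in I$. These factors already lie in $I^\circ$, so it suffices to verify membership in $I^\circ_{(A)}$, i.e., that $f_i\lhu A$, $A\rhu f_i$, $g_j\lhu A$, $A\rhu g_j$ are finite-dimensional. Each check is a one-line application of \equref{d1}. For instance, $(f_i\lhu a)(y)=f((ay)x_i)=f(a(yx_i))$ expresses $f_i\lhu a$ as a further translate of $f\lhu a$ by $x_i$ inside $I^*$, so $f_i\lhu A$ is finite-dimensional because $f\in I^\circ_l$; and $(a\rhu f_i)(y)=f(y(ax_i))$ identifies $a\rhu f_i$ with the translate of $f$ by the element $ax_i\in I$ under the left $I$-action on $I^*$ induced by multiplication, so $A\rhu f_i$ sits inside the finite-dimensional orbit of $f$ under that action because $f\in I^\circ$. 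The two analogous computations for $g_j$ use the remaining identities of \equref{d1} together with $f\in I^\circ$ and $f\in I^\circ_r$.

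For (b), use the proof of \prref{prop3.9} to write $\rho_r(f)=\sum f_i\ot g_i$ with $f_i(y)=f(ya_i)$, $a_i\in A$, and $\rho_l(f)=\sum l_i\ot h_i$ with $h_i(y)=f(b_iy)$, $b_i\in A$; \prref{prop3.9} already puts $f_i$ and $h_i$ into $I^\circ_{(A)}$, so all that is missing is $f_i,h_i\in I^\circ$. The third identity of \equref{d1}, $(xy)a_i=x(ya_i)$, turns $I$-translates of $f_i$ in $I^*$ into $I$-translates of $f$ by $ya_i$, landing in a finite-dimensional space because $f\in I^\circ$; the first identity, $b_i(xy)=(b_ix)y$, performs the same conversion for $h_i$, landing in a finite-dimensional image of $f\lhu b_i$ because $f\in I^\circ$.

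For (c), after (a) and (b) both sides of each equation take values in the expected tensor product, and direct pairing finishes the job. Evaluating $(\D\ot 1)\rho_r(f)$ and $(1\ot\rho_r)\D(f)$ on $x\ot y\ot a$ gives $f((xy)a)$ and $f(x(ya))$, which agree by the third identity of \equref{d1}. The remaining two equations reduce in the same way to $f(a(xy))=f((ax)y)$ and $f((xa)y)=f(x(ay))$, the first and second identities of \equref{d1}. The main obstacle is not conceptual but notational: one must scrupulously distinguish the external $A$-bimodule action on $I^*$ from the internal action of $I$ on $I^*$ coming from multiplication, and verify that translates under one action stay inside finite-dimensional orbits under the other. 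Every such check is immediate from \equref{d1}, so the argument is short once the notation is precise.
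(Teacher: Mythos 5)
Your proof is correct and follows essentially the same route as the paper: both use the explicit representations of $\D(f)$, $\rho_l(f)$, $\rho_r(f)$ as sums of translates of $f$ (from the proofs of \prref{prop3.3} and \prref{prop3.9}) and then invoke Eqs.~\equref{d1} to show each tensor factor has finite-dimensional orbit under the missing action, with (c) reducing to a direct pairing computation. The only difference is cosmetic — you phrase the finiteness checks as containments of orbits (e.g.\ $A\rhu f_i\subseteq I\rhu f$) where the paper writes out explicit basis expansions — so no further comment is needed.
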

\begin{proof}
Let $0\neq f\in I^d$. Then $f\in I^{\circ}$ and $f\in  I^{\circ}_{(A)}$. By \prref{prop3.3} and its proof,
we have $\D(f)=m^*(f)=\sum_{i=1}^n f_i\ot g_i\in I^{\circ}\ot I^{\circ}$, where
$\{f_1, f_2, \cdots, f_n\}$ is a basis of $I\rhu f$. Choose some elements
$x_1, x_2, \cdots, x_n\in I$ such that $x_i\rhu f= f_i$, and $y_1, y_2, \cdots, y_n\in I$ such that
$f_i(y_j)=\d_{ij}$. Then $f\lhu y_i=g_i$ and $g_i(x_j)=\d_{ij}$, $1\<i, j\<n$.
By \prref{prop3.9}, $\rho_l(f)\in A^{\circ}\ot I^{\circ}_{(A)}$ and
$\rho_r(f) \in I^{\circ}_{(A)}\ot A^{\circ}$.
Thus, we may assume $\rho_l(f)=\sum_{i=1}^m v_i\ot h_i\in A^{\circ}\ot I^{\circ}_{(A)}$ with $m$ minimal,
and $\rho_r(f)=\sum_{i=1}^s l_i\ot u_i \in I^{\circ}_{(A)}\ot A^{\circ}$ with $s$ minimal.
Then $\{h_1, h_2, \cdots, h_m\}$ is a basis of $f\lhu A$ and  $f\lhu a=\sum_{i=1}^m v_i(a)h_i$,
and $\{l_1, l_2, \cdots, l_s\}$ is a basis of $A\rhu f$ and $a\rhu f=\sum_{i=1}^su_i(a)l_i$, where $a\in A$.

(a) For $a\in A$, $x\in I$ and $1\<i\<n$,
we have $(a\rhu f_i)(x)=(x_i\rhu f)(xa)=f((xa)x_i)=f(x(ax_i))=\sum_{j=1}^n f_j(x)g_j(ax_i)$.
Hence $a\rhu f_i=\sum_{j=1}^n g_j(ax_i)f_j$, and so ${\rm dim}(A\rhu f_i)<\infty$.
Furthermore, we have $(f_i\lhu a)(x)=(x_i\rhu f)(ax)=f((ax)x_i)=f(a(xx_i))=\sum_{j=1}^m v_j(a)h_j(xx_i)
=\sum_{j=1}^m v_j(a)(x_i\rhu h_j)(x)$. Hence $f_i\lhu a=\sum_{j=1}^m v_j(a)(x_i\rhu h_j)$,
which implies ${\rm dim}(f_i\lhu A)<\infty$. By \prref{prop3.4} and \prref{prop3.6},
$f_i\in I^{\circ}_{(A)}$, and so $f_i\in I^d$. Similarly, one can show $g_i\in I^d$.
Thus, we have shown $\D(I^d)\ss I^d\ot I^d$. Hence $I^d$ is a subcoalgebra of $I^{\circ}$.

(b) Since $\{l_1, l_2, \cdots, l_s\}$ is a basis of $A\rhu f$, one may choose elements
$a_1,a_2,\cdots,a_s\in A$ such that $l_i=a_i\rhu f$.
Then for any $x, y\in I$, $(x\rhu l_i)(y)=(a_i\rhu f)(yx)=f((yx)a_i)=f(y(xa_i))=\sum_{j=1}^n f_j(y)g_j(xa_i)$,
and hence $x\rhu l_i=\sum_{j=1}^n g_j(xa_i)f_j$. Thus, ${\dim}(I\rhu l_i)<\infty$. Then by \prref{prop3.3}, $l_i\in I^{\circ}$,
and so  $l_i\in I^d$ since $l_i\in I^{\circ}_{(A)}$. If follows that $\rho_r(I^d)\ss I^d\ot A^{\circ}$.
Similarly, one can check that $\rho_l(I^d)\ss A^{\circ}\ot I^d$.
Therefore, $I^d$ is an $A^{\circ}$-subbicomodule of $I^{\circ}_{(A)}$.

(c) For any $x, y\in I$ and $a\in A$, we have
$((\D\ot 1)\rho_r(f))(x\ot y\ot a)=\sum_{i=1}^s \D(l_i)(x\ot y)u_i(a)=\sum_{i=1}^s l_i(xy)u_i(a)=f((xy)a)$
and $((1\ot \rho_r)\D(f))(x\ot y\ot a)=\sum_{i=1}^n f_i(x)\rho_r(g_i)(y\ot a)=\sum_{i=1}^n f_i(x)g_i(ya)=f(x(ya))$.
Hence $(\D\ot 1)\rho_r=(1\ot \rho_r)\D$ since $(xy)a=x(ya)$.
Similarly, one can check $(1\ot\D)\rho_l=(\rho_l\ot 1)\D$ and
$(\rho_r\ot 1)\D=(1\ot \rho_l)\D.$
\end{proof}

Let $C$ be a coalgebra. Then $C^*$ is an algebra with the multiplication given by
$(fg)(c)=\sum f(c_1)g(c_2)$ for any $f, g\in C^*$ and $c\in C$.
Let $V$ be a left (resp. right) $C$-comodule.
Then $V^*$ is a left (resp. right) $C^*$-module
with the action given by $(f\eta)(v)=\sum f(v_{(-1)})\eta(v_{(0)})$ 
(resp. $(\eta f)(v)=\sum\eta(v_{(0)}) f(v_{(1)})$)
for any $f\in C^*$, $\eta\in V^*$ and $v\in V$.

\begin{theorem}
\begin{enumerate}
\item[(a)] Let $(A, I)$ be a Dorroh pair of algebras. Then $(A^{\circ}, I^d)$ is a Dorroh pair of coalgebras
and $A^{\circ}\ltimes_d I^d\cong (A\ltimes_d I)^{\circ}$ as coalgebras.
\item[(b)] Let $(C, P)$ be a Dorroh pair of coalgebras. Then $(C^*, P^*)$ is a Dorroh pair of algebras
and $C^*\ltimes_d P^*\cong (C\ltimes_d P)^*$ as algebras.
\end{enumerate}
\end{theorem}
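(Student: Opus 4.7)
The plan is to handle (b) first as the direct dualization, and then to use the finite-dual machinery of Section~3 (especially \leref{lem3.11}) to treat (a).

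For (b), under the canonical identification $(C\ltimes_d P)^*\cong C^*\oplus P^*$ induced by $C\ltimes_d P=C\oplus P$, I would transport the comultiplication \equref{e6} to a multiplication on $C^*\oplus P^*$. The first summand of \equref{e6} dualizes to the multiplication of $C^*$, the last summand to that of $P^*$, and the two mixed summands to a $C^*$-bimodule structure on $P^*$. The compatibility identities \equref{e3}--\equref{e5} for the Dorroh pair $(C,P)$ dualize precisely to \equref{d1}, so $(C^*,P^*)\in\mathcal{DPA}$; comparing the transported multiplication with the formula \equref{d2} then gives $(C\ltimes_d P)^*\cong C^*\ltimes_d P^*$ as algebras.

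For (a), the fact that $(A^\circ, I^d)$ is a Dorroh pair of coalgebras is essentially the content of \leref{lem3.11}: parts (a) and (b) supply the coalgebra and bicomodule structures on $I^d$, while part (c) is precisely the compatibility \equref{e3}--\equref{e5}. The key step is to establish the vector-space identification
\begin{equation*}
(A\ltimes_d I)^\circ\;\cong\; A^\circ\oplus I^d,
\end{equation*}
realized by sending $f\in(A\ltimes_d I)^*$ to the pair $(f\tau_A,f\tau_I)\in A^*\oplus I^*$. Writing $\mu$ for the multiplication \equref{d2} of $A\ltimes_d I$, one computes
\begin{equation*}
\mu^*(f)\bigl((a,x)\ot(b,y)\bigr)=(f\tau_A)(ab)+(f\tau_I)(ay)+(f\tau_I)(xb)+(f\tau_I)(xy),
\end{equation*}
whose four bihomogeneous summands live respectively in the four direct summands of $(A^*\oplus I^*)\ot(A^*\oplus I^*)$. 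By \prref{prop3.1}(g), \prref{prop3.4}(f) and \prref{prop3.6}(f), the condition $\mu^*(f)\in(A\ltimes_d I)^*\ot(A\ltimes_d I)^*$ is therefore equivalent to $f\tau_A\in A^\circ$ together with $f\tau_I\in I^\circ\cap I_l^\circ\cap I_r^\circ = I^d$.

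To conclude I would verify that this identification intertwines the comultiplications, by substituting the splittings of $\Delta_{I^d}(f\tau_I)$, $\rho_l(f\tau_I)$ and $\rho_r(f\tau_I)$ into the displayed formula for $\mu^*(f)$ and matching the outcome term by term against \equref{e6} applied to $(f\tau_A,f\tau_I)\in A^\circ\ltimes_d I^d$. The main obstacle is the direct-sum splitting that identifies $(A\ltimes_d I)^\circ$: it is crucial that the four bihomogeneous components can be recognized and controlled separately, so that the algebra and bimodule finite-dual conditions on $f\tau_I$ arise together to produce $I^d$; once that splitting is in place, the remainder of the argument is bookkeeping using \equref{e6}.
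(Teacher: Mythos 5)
Your proposal is correct and follows essentially the same route as the paper: part (b) by direct dualization, and part (a) by invoking \leref{lem3.11} for the Dorroh-pair structure, establishing $(A\ltimes_d I)^{\circ}=A^{\circ}\oplus I^d$ via the restriction maps, and checking that $\phi\mapsto(\phi_A,\phi_I)$ intertwines the comultiplications. The only (cosmetic) difference is that you certify membership in $(A\ltimes_d I)^{\circ}$ through the tensor-product criteria \prref{prop3.1}(g), \prref{prop3.4}(f), \prref{prop3.6}(f) on the four bihomogeneous components, whereas the paper uses the equivalent finite-dimensionality of $(A\ltimes_d I)\rhp\phi$ and its projections.
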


\begin{proof}
(a) By \prref{prop3.3} and \leref{lem3.11}, $(A^{\circ},I^d)$ is a Dorroh pair
of coalgebras. Hence one can form a coalgebra Dorroh extension $A^{\circ}\ltimes_d I^d$.

One may regard $A^*$ and $I^*$ as subspaces of $(A\ltimes_d I)^*$:
for any $f\in A^*$ and $g\in I^*$, define $f, g\in (A\ltimes_d I)^*$ by
$f(a, x)=f(a)$ and $g(a, x)=g(x)$, $a\in A$, $x\in I$.
In this case, we have $(A\ltimes_d I)^*=A^*\oplus I^*$ as vector spaces.
Let $\pi_A: (A\ltimes_d I)^*\ra A^*$, $\phi\mapsto\phi_A$ and
$\pi_I: (A\ltimes_d I)^*\ra I^*$, $\phi\mapsto\phi_I$ be the corresponding projections.
Then an element $\phi \in (A\ltimes_d I)^*$ can be uniquely expressed as $\phi=\phi_A+\phi_I$,
where $\phi_A(a,x)=\phi_A(a)=\phi(a,0)$ and $\phi_I(a, x)=\phi_I(x)=\phi(0, x)$, $(a, x)\in A\ltimes_d I$.

Now let $\phi=\phi_A+\phi_I\in (A\ltimes_d I)^*$, Then for any $(a,x), (b,y)\in A\ltimes_d I$, we have
$((a,x)\rhu \phi)(b,y)=\phi((b,y)(a,x))=\phi(ba,bx+ya+yx)
=\phi_A(ba)+\phi_I(bx)+\phi_I(ya)+\phi_I(yx)
=(a\rhu \phi_A)(b)+(\phi_I\lhu -)(x)(b)+(a\rhu \phi_I)(y)+(x\rhu \phi_I)(y)$.
Hence we have
\begin{equation}\eqlabel{e16}
(a,x)\rhu \phi=(a\rhu \phi_A)+(\phi_I\lhu -)(x)+(a\rhu \phi_I)+(x\rhu \phi_I),
\end{equation}
and so $(A\ltimes_d I)\rhu \phi \ss (A\rhu \phi_A) + (\phi_I\lhu -)(I)+(A\rhu \phi_I)+
(I\rhu \phi_I)$. If $\phi_A\in A^{\circ}$ and $\phi_I\in I^d$,
then ${\rm dim}(A\rhu \phi_A)<\infty$, ${\rm dim}((\phi_I\lhu -)(I))<\infty$, ${\rm dim}(A\rhu \phi_I)<\infty$
and ${\rm dim}(I\rhu \phi_I)<\infty$ by \prref{prop3.1}, \prref{prop3.4} and \prref{prop3.6}.
Hence ${\rm dim}((A\ltimes_d I)\rhu \phi)<\infty$, and so $\phi\in(A\ltimes_d I)^{\circ}$.
Conversely, if $\phi\in(A\ltimes_d I)^{\circ}$ then ${\rm dim}((A\ltimes_d I)\rhu \phi)<\infty$ by \prref{prop3.1}.
From Eq.\equref{e16}, one gets that $a\rhu \phi_A=\pi_A((a,0)\rhu\phi)$,
$(\phi_I\lhu -)(x)=\pi_A((0,x)\rhu\phi)$, $a\rhu \phi_I=\pi_I((a,0)\rhu\phi)$ and $x\rhu \phi_I=\pi_I((0,x)\rhu\phi)$.
Hence $A\rhu \phi_A\subseteq\pi_A((A\ltimes_d I)\rhu \phi)$, $(\phi_I\lhu -)(I)\subseteq\pi_A((A\ltimes_d I)\rhu \phi)$,
$A\rhu \phi_I\subseteq\pi_I((A\ltimes_d I)\rhu \phi)$ and $I\rhu \phi_I\subseteq\pi_I((A\ltimes_d I)\rhu \phi)$.
This implies that $A\rhu \phi_A$, $(\phi_I\lhu -)(I)$, $A\rhu \phi_I$ and $I\rhu \phi_I$ are all finite dimensional,
and so $\phi_A\in A^{\circ}$ and $\phi_I\in I^d$ by \prref{prop3.1}, \prref{prop3.4} and \prref{prop3.6}.
Thus, $(A\ltimes_d I)^{\circ}=A^{\circ}\oplus I^d$ as vector spaces

Define a map $F:(A\ltimes_d I)^{\circ}\ra A^{\circ}\ltimes_d I^d$
by $F(\phi)=(\phi_A, \phi_I)$. Then by the discussion above, $F$ is a linear isomorphism.
For any $\phi\in(A\ltimes_d I)^{\circ}$, $a, b\in A$ and $x, y\in I$, we have
$$((F\ot F)\D (\phi))((a,x)\ot (b,y))=\phi((a,x)(b,y))=\phi(ab,ay+xb+xy)$$
and
$$\begin{array}{rl}
&(\D(F(\phi)))((a,x)\ot (b,y))\\
=&(\sum((\phi_{A})_1,0)\ot((\phi_A)_2,0)+\sum ((\phi_I)_{(-1)},0)\ot (0,(\phi_I)_{(0)})\\
&+\sum (0,(\phi_I)_{(0)})\ot ((\phi_I)_{(1)},0)+\sum (0,(\phi_I)_1)\ot (0,(\phi_I)_2))((a,x)\ot (b,y))\\
=&\phi_A(ab)+\phi_I(ay)+\phi_I(xb)+\phi_I(xy)=\phi(ab,ay+xb+xy).\\
\end{array}$$
This shows that $F$ is a coalgebra homomorphism. Therefore,
$(A\ltimes_d I)^{\circ}\cong A^{\circ}\ltimes_d I^d$ as coalgebras.

(b) At first, it is straightforward to check that $(C^*, P^*)$ is a Dorroh pair of algebras,
and hence one can form an algebra Dorroh extension $C^*\ltimes_d P^*$. 
Then similarly to (a), there is a canonical linear isomorphism
$F: C^*\ltimes_d P^*\cong (C\ltimes_d P)^*$ given by $(F(f,g))(c,p)=f(c)+g(p)$,
$f\in C^*$, $g\in P^*$, $c\in C$ and $p\in P$.
A straightforward verification shows that $F$ is an algebra homomorphism.
\end{proof}

\begin{remark}
If $A$ is a finite dimensional algebra, then $A^*=A^{\circ}$. Furthermore, if $(A, I)$ is a Dorroh pair of algebras
with ${\rm dim}(A)<\infty$ and ${\rm dim}(I)<\infty$, then $(A\ltimes_d I)^*=A^*\ltimes_d I^*$ as coalgebras.
\end{remark}

At the end of this section, we consider the finite duals of modules over unital algebras.
The finite dual of a unital algebra $A$ was well described before, see \cite[Lemma 9.1.1]{Mo}.
For a module over a unital algebra, we have the following propositions.

\begin{proposition}\prlabel{prop3.14}
Let $A$ be a unital algebra and $M$ a right $A$-module.
Let $f\in M^*$. Then the following are equivalent:
\begin{enumerate}
\item[(a)] $f\in M_r^{\circ}$;
\item[(b)] $f(MI)=0$ for some ideal $R$ of $A$ of finite codimension;
\item[(c)] $\dim f(M-A)<\infty$, where $f(M-A)=A\rhu((-\rhu f)(M))=(-\rhu (A\rhu f))(M)=((-A)\rhu f)(M)$.
\end{enumerate}
\end{proposition}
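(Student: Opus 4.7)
The plan is to use \prref{prop3.4} to reduce the three conditions to equivalences already handled there, with unitality of $A$ supplying the missing symmetry and allowing us to replace a one-sided ideal by a two-sided one.

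First I would show (a) $\Leftrightarrow$ (b). The implication (b) $\Rightarrow$ (a) is immediate, since a two-sided ideal is in particular a left ideal, so \prref{prop3.4}(c) applies. For (a) $\Rightarrow$ (b), start from \prref{prop3.4}(c) to obtain a left ideal $L$ of $A$ with $\dim(A/L)<\infty$ and $f(ML)=0$. View $A/L$ as a finite-dimensional left $A$-module and let $R:=\mathrm{Ann}_A(A/L)=\{r\in A\mid rA\ss L\}$. This $R$ is automatically two-sided (it is the annihilator of a module), and the algebra homomorphism $A\ra\End_k(A/L)$ with kernel $R$ forces $\dim(A/R)<\infty$. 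Since $1_A\in A$, for any $r\in R$ we have $r=r\cdot 1_A\in L$, i.e.\ $R\ss L$, so $f(MR)\ss f(ML)=0$. This is the only place where unitality really enters condition (b); without $1_A\in A$ one cannot guarantee $R\ss L$.

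Next I would show (a) $\Leftrightarrow$ (c). For (c) $\Rightarrow$ (a), note that since $A$ is unital, $1_A\rhu g=g$ for every $g\in A^*$, so $(-\rhu f)(M)\ss A\rhu((-\rhu f)(M))=f(M-A)$. Hence $\dim(-\rhu f)(M)\<\dim f(M-A)<\oo$, and \prref{prop3.4}(e) yields $f\in M_r^{\circ}$. For (a) $\Rightarrow$ (c), use \prref{prop3.4}(d) to pick a basis $\{a_1\rhu f,\cds,a_n\rhu f\}$ of the finite-dimensional space $A\rhu f$. For any $a\in A$ one may write $a\rhu f=\sum_{i=1}^n c_i(a)(a_i\rhu f)$ with $c_i(a)\in k$. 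Moreover each $a_i\rhu f$ again lies in $M_r^{\circ}$, because $A\rhu(a_i\rhu f)=(Aa_i)\rhu f\ss A\rhu f$ is finite-dimensional, so \prref{prop3.4}(d) applies and by \prref{prop3.4}(e) the space $(-\rhu(a_i\rhu f))(M)$ is finite-dimensional. A typical generator of $f(M-A)$ is the functional $b\mapsto f(xba)$ with $x\in M$, $a\in A$, and
\begin{equation*}
f(xba)=(a\rhu f)(xb)=\sum_{i=1}^n c_i(a)(a_i\rhu f)(xb)=\sum_{i=1}^n c_i(a)\bigl((-\rhu(a_i\rhu f))(x)\bigr)(b).
\end{equation*}
Consequently $f(M-A)\ss\sum_{i=1}^n(-\rhu(a_i\rhu f))(M)$, which is finite-dimensional. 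The three alternative descriptions of $f(M-A)$ given in the statement can then be checked by unravelling $(-\rhu(a_i\rhu f))(x)(b)=f(xba_i)$.

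The main obstacle is really the asymmetry in (a) $\Rightarrow$ (b): \prref{prop3.4} only hands us a left ideal of finite codimension, and turning it into a two-sided ideal of finite codimension on which $f$ still annihilates $M(-)$ needs the annihilator-of-module trick together with the hypothesis that $1_A$ lies in $A$. Everything else is either a direct appeal to \prref{prop3.4} or a clean exploitation of unitality in the form $1_A\rhu g=g$.
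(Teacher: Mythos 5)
Your proof is correct, and the two load-bearing steps coincide exactly with the paper's: for (a)$\Rightarrow$(b) you pass from the left ideal $L$ supplied by \prref{prop3.4} to the two-sided annihilator $\{a\in A\mid aA\ss L\}$ of the finite-dimensional module $A/L$, using $1_A$ to get it inside $L$; and for (c)$\Rightarrow$(a) you use $1_A$ to see $(-\rhu f)(M)\ss f(M-A)$. The only structural difference is the third leg: the paper closes the cycle with (b)$\Rightarrow$(c), observing that $f(MIA)\ss f(MI)=0$ forces $f(M-A)\ss I^{\perp}$, which is a one-line argument once the two-sided ideal $I$ is in hand, whereas you prove (a)$\Rightarrow$(c) directly by expanding $a\rhu f$ over a basis $\{a_i\rhu f\}$ of $A\rhu f$ and bounding $f(M-A)$ inside $\sum_i(-\rhu(a_i\rhu f))(M)$. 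Your version is slightly longer but equally valid, and it has the mild virtue of not needing the two-sided ideal at all for that implication; the paper's route is more economical because the hard work done in (a)$\Rightarrow$(b) is reused.
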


\begin{proof}
For any $a\in A$ and $x\in M$, $f(x-a)\in A^*$ with $(f(x-a))(b)=f(xba)$, $b\in A$.
Hence $f(M-A)$ is a subspace of $A^*$.

(a)$\Rightarrow$(b) Assume $f\in M_r^{\circ}$. Then by \prref{prop3.4},
there is a left ideal $L$ of $A$ with $\dim A/L<\infty $ such that $f(ML)=0$.
Hence $A/L$ is a left $A$-module and there is a corresponding algebra homomorphism $\phi: A\ra \End (A/L)$.
Let $I:=\Ker(\phi)=\{a\in A\mid aA\subseteq L\}$. Then $I$ is an ideal of $A$ of finite codimension.
Since $A$ is unital, $I\subseteq L$. Hence $f(MI)=0$.

(b)$\Rightarrow$(c) Suppose $I$ is an ideal of $A$ of finite codimension with $f(MI)=0$.
Since $MIA\ss MI$, $f(MIA)=0$. Hence $f(M-A)\ss I^{\perp}:=\{g\in A^*| g(I)=0\}$,
and so $\dim f(M-A)<\infty$.

(c)$\Rightarrow$(a) Suppose $\dim f(M-A)<\infty$. Since $A$ is unital, $(-\rhu f)(M)=f(M-)\ss f(M-A)$.
Hence $\dim((-\rhu f)(M))<\infty$, and so $f\in M_r^{\circ}$ by \prref{prop3.4}.
\end{proof}

\begin{proposition}\prlabel{prop3.15}
Let $A$ be a unital algebra and $M$ a left $A$-module. Let $f\in M^*$. Then the following are equivalent:
\begin{enumerate}
\item[(a)] $f\in M_l^{\circ}$;
\item[(b)] $f(IM)=0$ for some ideal $I$ of $A$ of finite codimension;
\item[(c)] $\dim f(A-M)<\infty$, where $f(A-M)=((f\lhu -)(M))\lhu A=((f\lhu A)\lhu-)(M)=(f\lhu (A-))(M)$.
\end{enumerate}
\end{proposition}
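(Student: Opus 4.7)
The plan is to prove the cyclic chain $(a)\Rightarrow(b)\Rightarrow(c)\Rightarrow(a)$, mirroring the proof of \prref{prop3.14} with the roles of left and right interchanged and using the characterizations of $M_l^{\circ}$ provided by \prref{prop3.6}. For $(a)\Rightarrow(b)$, I would invoke \prref{prop3.6}(c) to obtain a right ideal $R$ of $A$ with $\dim(A/R)<\infty$ such that $f(RM)=0$. Then $A/R$ is a finite-dimensional right $A$-module, so the induced algebra homomorphism $\phi:A\ra \End(A/R)$ has a kernel $I:=\{a\in A\mid Aa\ss R\}$ that is a two-sided ideal of finite codimension. Unitality of $A$ forces $I\ss R$ (take $1\in A$), and hence $f(IM)\ss f(RM)=0$.

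For $(b)\Rightarrow(c)$, given an ideal $I\ss A$ of finite codimension with $f(IM)=0$, the inclusion $AI\ss I$ ensures that for every $a\in A$ and $x\in M$ the functional $b\mapsto f(abx)$ vanishes on $I$; therefore $f(A-M)\ss I^{\perp}:=\{g\in A^*\mid g(I)=0\}$, which is finite-dimensional. For $(c)\Rightarrow(a)$, the identity $1\in A$ lets one identify each $(f\lhu -)(x):b\mapsto f(bx)$ with the functional $b\mapsto f(1\cdot b\cdot x)$, which by definition belongs to $f(A-M)$; thus $(f\lhu -)(M)\ss f(A-M)$ is finite-dimensional, and \prref{prop3.6}(e) yields $f\in M_l^{\circ}$.

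No step is technically demanding. The only genuine subtleties are the two uses of unitality of $A$: in $(a)\Rightarrow(b)$, to descend from a finite-codimensional right ideal to a two-sided ideal still contained in it, and in $(c)\Rightarrow(a)$, to place $(f\lhu -)(M)$ inside $f(A-M)$. Beyond these, the argument is a routine transcription of \prref{prop3.14}.
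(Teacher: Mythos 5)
Your proof is correct and is precisely the left/right mirror of the paper's proof of \prref{prop3.14}, which is exactly what the paper itself intends when it disposes of \prref{prop3.15} with "similar to the proof of \prref{prop3.14}." Both uses of unitality you flag (forcing the annihilator ideal $I=\{a\in A\mid Aa\ss R\}$ into $R$, and embedding $(f\lhu -)(M)$ into $f(A-M)$) are the same two points where the paper's argument for \prref{prop3.14} uses $1\in A$.
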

\begin{proof}
It is similar to the proof of \prref{prop3.14}.
\end{proof}

\centerline{ACKNOWLEDGMENTS}

This work is supported by NNSF of China (No. 11571298, 11971418) and
Graduate student scientific research innovation projects in Jiangsu Province, No. XKYCX18\_036

\end{document}